\newcommand{\cderivative}{L}
\newcommand{\wsep}{w_{\mathrm{sep}}}
\begin{document}
\title{Detecting Mutual Excitations in Non-Stationary Hawkes Processes} 


\author{%
  \IEEEauthorblockN{Elchanan Mossel}
  \IEEEauthorblockA{Department of Mathematics \\
                    Massachusetts Institute of Technology\\
                    Cambridge, MA\\
                    Email: elmos@mit.edu}
\and
  \IEEEauthorblockN{Anirudh Sridhar}
  \IEEEauthorblockA{Department of Electrical \& Computer Engineering \\
                    New Jersey Institute of Technology\\
                    Newark, NJ\\
                    Email: anirudh.sridhar@njit.edu}
}

\maketitle

\begin{abstract}
We consider the problem of learning the network of mutual excitations (i.e., the dependency graph) in a non-stationary, multivariate Hawkes process. 
We consider a general setting where baseline rates at each node are time-varying and delay kernels are not shift-invariant. 
Our main results show that if the dependency graph of an $n$-variate Hawkes process is sparse (i.e., it has a maximum degree that is bounded with respect to $n$), our algorithm accurately reconstructs it from data after observing the Hawkes process for $T = \poly \log(n)$ time, with high probability.
Our algorithm is computationally efficient, and provably succeeds in learning dependencies even if only a subset of time series are observed and event times are not precisely known.
\end{abstract}

\section{Introduction}

Many large-scale systems exhibit self-exciting behavior, where events that occur at one site can trigger increased activity at other locations.
Hawkes processes are the classical model for self-exciting of this type, and have found applications in numerous domains, including epidemiology \cite{meyer2012spacetime, meyer2014powerlaw, dong2023nonstationary, chiang2022hawkes}, geophysics \cite{ogata1988statistical, ogata1999seismicity}, criminology \cite{Mohler2011crime, mohler2014marked} and finance \cite{bacry2015hawkes}. 
Understanding the mechanisms that drive excitations are critical for the prediction and control of these systems.

Hawkes processes are composed of two main elements: a baseline rate of events at each site or node, and delay kernels that determine the impact of past events on the rate of future events at the same site or node (self-excitations) as well as the impact on neighboring nodes (mutual excitations).
The stationary case, where baseline events are constant over time and delay kernels are shift-invariant, has received extensive mathematical treatment; see \cite{bacry2015hawkes} and references therein.
However, in many applications of interest, non-stationarities play a major role. For instance, the time of year can significantly impact the baseline spreading rate of a disease in a population. Furthermore, infections that occur at later points in time may have a reduced effect on disease dynamics as herd immunity increases or mitigation measures are implemented.
Without properly accounting for such non-stationarities, it may be unclear whether bursts in activity are due to changing environments or causal effects from other sites. 

\emph{Contributions.} In this paper, we develop a new algorithm with provable guarantees for detecting sparse causal interactions (i.e., mutual excitations) in non-stationary, multivariate Hawkes processes.
Though methods for estimating non-stationary Hawkes processes exist \cite{cai2022network, chen2013inference, roueff2016locally, chen2023detection, cheng2025spatiotemporal},
to the best of our knowledge this is the first work to provide theoretical guarantees for learning aspects of multivariate Hawkes processes with time-varying baseline rates and non-shift-invariant delay kernels.

The idea behind our approach is that when an event happens at site $i$, there is an abrupt jump in the event rates at sites that are influenced by $i$. 
To measure whether $i$-events cause such a jump in the event rate at site $j$, we can compare the number of times a $j$-event occurs shortly after an $i$-event (which we refer to as an $ij$-event) to the number of $ji$-events (where $j$ occurs shortly before $i$). 
If there is a significant difference between the two counts, there must exist interactions between $i$ and $j$.
However, bi-directional interactions between $i$ and $j$ can cause certain cancellations to happen, making it appear that the two processes are uncorrelated when in fact they are driven by each other. 
To address this limitations, we consider \emph{higher-order interactions} between $i$ and $j$; namely, we count events of the type $iij, iji, jii$ to assess how the rate of $j$-events is impacted by \emph{two} $i$-events. Combined, we show that pairwise and higher-order event counts conclusively determine whether or not $i$ and $j$ interact.
Our theoretical results show that to learn all interactions between $n$ interacting time series, it suffices to observe the system for $T = \poly \log(n)$ time.
Our method is simple to implement and enjoys additional robustness properties: interactions can be detected even if only a subset of sites are observed, and the algorithm only requires quantized (rather than exact) event times. 

\emph{Related work.} Hawkes processes were first formulated by A.G. Hawkes in 1971 \cite{hawkes1971}, and parameter estimation in this model has been a topic of intense study since then; see, e.g., \cite{reinhart2018review, cheng2025spatiotemporal} and references therein for a review of traditional and modern techniques.
While most theoretical work establishes the asymptotic consistency of certain estimators, the works \cite{hansen2015lasso, bacry2020sparse, wang2024statistical, cai2022network} additionally develop finite-sample guarantees. In particular, it is shown in various contexts that if the time series are observed for $T = \poly \log (n)$ time, the individual rate functions can be estimated with small error. 
Of these, only \cite{cai2022network} considers non-stationary Hawkes processes, though they require delay kernels to be shift invariant, which is not needed in the present work. 
However, we focus on learning the fundamental dependency structure, while \cite{cai2022network} infers all model parameters.

We also note that our work contributes to the literature on {robustly} learning Hawkes processes under event time uncertainty \cite{Kirchner2017estimation, trouleau19synchronization, trouleau21cumulants, cheng2024point, potiron2025mutually} and unobserved nodes \cite{liu2020partially, wang2021causal, thams2024local}. 

Beyond Hawkes processes, our work is closely related to the literature on learning graphical models from dynamics, such as the Glauber dynamics \cite{bresler2017learning, dutt2021exponential, gaitonde2024unified, gaitonde2024efficiently} or spreading processes \cite{NS12_cascades, ACFKP13_trace_complexity, WT04_epidemic, AA05_blog}.
Like the current work, a common strategy in this literature is to analyze the statistics of short sequences of events. We therefore conjecture that our methods may be broadly applicable.

\emph{Notation.} For a positive integer $n$, we denote $[n] : = \{ 1, \ldots, n \}$.
For a time-varying stochastic process, we let $\{ \cF_t \}_{t \ge 0}$ be the natural filtration.
We use standard asymptotic notation, i.e., $O(\cdot), \Omega(\cdot), o(\cdot)$, and all limits are as $n \to \infty$ unless otherwise specified. 

\emph{Organization.} In Section \ref{sec:models}, we define the multivariate Hawkes process and state our main results on learning the dependency graph of mutual excitations. Section \ref{sec:alg} provides the intuition and details behind our algorithm.
Section \ref{sec:alg_analysis} contains the proof of our main results, and we conclude in Section \ref{sec:conclusion}.
Additional proofs can be found in the Appendix.

\section{Models and results}
\label{sec:models}

\subsection{The multivariate Hawkes process}
\label{subsec:hawkes}

For a point process $N(t)$, its rate function is given by 
\[
\lambda(t) : = \lim_{\epsilon \to 0} \frac{\E [ N(t + \epsilon) \vert \cF_t] - N(t)}{\epsilon}.
\]

In (multivariate) Hawkes processes, the rate function can depend on the full history of the process itself as well as other point processes \cite{hawkes1971}. Formally, let $N_1(t), \ldots, N_n(t)$ denote a collection of point processes. The rate function of $N_i(t)$ is given by 
\begin{equation}
\label{eq:multivariate_hawkes}
\lambda_i(t) : = \mu_i(t) + \sum_{j = 1}^n w_{ij} \int_0^t \phi_{ij}(t , s) dN_j(s),
\end{equation}
where $\mu_i(t)$ is a non-negative deterministic function that captures the baseline rate of events, $\phi_{ij}$ is a non-negative delay kernel that captures how past events at $j$ affect the current rate of $i$-events, and $w_{ij}$ is a non-negative parameter that captures the magnitude of the mutual excitations of $j$ on $i$. Throughout the paper, we assume the following: 

\begin{assumption}[Baseline rates]
\label{as:baseline}
There is a constant $\mu_{\min} > 0$ such that $\mu_i(t) \ge \mu_{\min}$ for all $i \in [n]$ and $t \ge 0$.
\end{assumption}

\begin{assumption}[Delay kernels]
\label{as:delay}
For all $i,j \in [n]$, $\phi_{ij}$ is continuous and non-negative.
Furthermore, for all $s \ge 0$, $\phi_{ij}(s,s) = 1$ and $\phi_{ij}(\cdot, s)$ is decreasing. 
\end{assumption}

\begin{assumption}[Stability]
\label{as:stability}
There is a constant $\Phi > 0$ such that for all $i,j \in [n]$ and all $t \ge 0$, $\int_0^t \phi_{ij}(t,x) dx \le \Phi$.
Furthermore, there is a constant $\eta > 0$ satisfying $\sum_{j = 1}^n w_{ij} \int_0^t \phi_{ij}(t,x) dx \le 1 - \eta$ for all $i \in [n]$ and all $t \ge 0$.
\end{assumption}

Assumptions \ref{as:baseline}, \ref{as:delay} and \ref{as:stability} are quite standard, although it is typically assumed in theoretical work that $\mu_i$ is constant and $\phi_{ij}(t, s)$ is a function of $t - s$.
Assumption \ref{as:stability} ensures that the process does not blow up in finite time. In particular, if $\phi_{ij}(t,x) = \varphi_{ij}(t - x)$ for some function $\varphi_{ij}$ as is commonly assumed, the condition in the assumption reduces to $\sum_{j = 1}^n w_{ij} \int_0^\infty \varphi_{ij}(x) dx \le 1 - \eta$, which ensures that the maximum rate of events is almost surely finite; see, e.g., \cite[Theorem 7]{bremaud1996stability}.

\begin{assumption}[Smoothness]
\label{as:smoothness}
There is a constant $\cderivative$ such that for all $i \in [n]$ and $t \ge 0$, $| \mu_i'(t) | / \mu(t) \le \cderivative$.
Additionally, for all $i,j \in [n]$ and $t \ge s \ge 0$, $| \frac{d\phi_{ij}}{dt} (t, s) | / \phi_{ij}(t, s) \le \cderivative$.
\end{assumption}

Assumption \ref{as:smoothness} implies that $\mu_i(\cdot)$ and $\phi_{ij}( \cdot, s)$ do not explode or decay faster than an exponential function. 
This assumption is satisfied in the most common setting for Hawkes processes, where $\mu_i$ is constant and $\phi_{ij}$ is an exponentially decaying function. However, we remark that heavy-tailed delay kernels such as those used in earthquake modeling \cite{ogata1988statistical, ogata1999seismicity} do not satisfy this assumption. 
An important avenue for future work is to remove this assumption to increase the applicability of our results.

\begin{assumption}[Mutual excitations]
\label{as:mutual}
There are constants $w_{\min}, w_{\max} > 0$ such that, for all $i,j \in [n]$, either (1) $w_{ij} = 0$ or (2) $w_{\min} \le w_{ij} \le w_{\max}$.
Additionally, there is a constant $\wsep >0$ such that $w_{ii} - w_{ij}  \ge \wsep$ for all distinct pairs $i,j \in [n]$.
\end{assumption}

The assumption that $w_{ij} = 0$ or $w_{ij} \ge w_{\min}$ is a natural one in the literature on learning graphical models, as it allows the algorithm to set a concrete threshold to determine the presence of an edge.
The assumption that $w_{ij} \le w_{\max}$ is helpful in bounding the maximum rate of events, and is useful in our analysis but is not needed by our algorithm.
Additionally, the condition $w_{ii} > w_{ij}$ reflects a locality constraint, as self-excitations may be stronger than influences from other locations.
Although not strictly necessary for Hawkes process inference, this assumption prevents degenerate edge cases in our algorithm.
We finally note that our results remain valid under the more general condition $| w_{ii} - w_{ij} | \ge \wsep$.

\begin{assumption}[Sparsity]
\label{as:sparsity}
For each $i \in [n]$, there are at most $d$ nonzero elements in the collection $\{ w_{ij} \}_{j \in [n]}$.
\end{assumption}

Common in the literature on graphical models, the sparsity assumption reflects that influential connections are typically few relative to the total system size.

\subsection{Detecting mutual excitations}

Our goal is to detect whether there exist mutual excitations between nodes $i$ and $j$ (i.e., whether $w_{ij} > 0$ or $w_{ji} > 0$), for any pair $i,j \in [n]$.
The set of all ground-truth mutual excitations can be captured by an undirected dependency graph $G^*$, also known as the \emph{Markov blanket}, which has vertex set $[n]$, and $(i,j)$ is an edge in $G^*$ if and only if $w_{ij} > 0$ or $w_{ji} > 0$.
Our main result shows that $G^*$ can be learned after observing the multivariate Hawkes process for $T = \poly \log (n)$ time.

\begin{theorem}
Suppose that the point processes $N_1, \ldots, N_n$ are observed in $[0,T]$, with $T = \log^{100}(n)$. Then there is an estimator $\widehat{G}$ such that $\widehat{G} = G^*$ with probability $1 - o(1)$.
\end{theorem}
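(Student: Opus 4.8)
\emph{Proof plan.} I would assemble $\widehat G$ from a few low-order \emph{event-pattern statistics}, in the spirit of Section~\ref{sec:alg}. Fix a window length $\delta>0$ equal to a small enough constant, depending only on $\mu_{\min},\Phi,\eta,L,w_{\min},w_{\max},\wsep$ and $d$. For distinct $i,j$, let $X_{ij}$ count the ordered pairs consisting of an $i$-event followed by a $j$-event within a window of length $\delta$, let $X_{ji}$ be its symmetric counterpart, and let $X_{iij},X_{iji},X_{jii}$ count the ordered triples of the indicated types within a window of length $\delta$. Form the pairwise statistic $S^{(1)}_{ij}:=X_{ij}-X_{ji}$ and a higher-order statistic $S^{(2)}_{ij}:=X_{iij}-2X_{iji}+X_{jii}$, together with the mirror statistic $S^{(2)}_{ji}$ obtained by swapping $i$ and $j$. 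The estimator declares $(i,j)$ an edge of $\widehat G$ if and only if $|S^{(1)}_{ij}|>\tau_1$, or $|S^{(2)}_{ij}|>\tau_2$, or $|S^{(2)}_{ji}|>\tau_2$, for thresholds $\tau_1=\Theta(\delta T)$ and $\tau_2=\Theta(\delta^2 T)$ fixed in the analysis; this rule is computable in polynomial time.

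\emph{Step 1 (the signal): a structural dichotomy.} Using the rate equation~(\ref{eq:multivariate_hawkes}), the predictability of each $\lambda_i$, and an iterated martingale argument (replace the innermost $dN_k$ by $\lambda_k\,dt+dM_k$; after reordering the integrals, the $dM_k$-part integrates a predictable process and therefore has mean zero), one evaluates the means of these statistics. Every correction is a multiplicative $O(L\delta)$ error --- Assumption~\ref{as:smoothness} bounds the drift of $\mu_i$ and $\phi_{ij}$ over a window of length $\delta$, and $\phi_{ij}(s,s)=1$ --- or an additive term a factor $O(\delta)$ smaller, coming from additional events landing in the window, which Assumption~\ref{as:sparsity} and moment bounds on the event counts control. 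This should yield
\[
\E\big[S^{(1)}_{ij}\big]\;\approx\;\delta\big(w_{ji}\,\E[N_i(T)]-w_{ij}\,\E[N_j(T)]\big),
\]
and, after the longer computation for the triples together with the algebraic cancellation of every contribution that does not witness a genuine $i\leftrightarrow j$ interaction,
\[
\E\big[S^{(2)}_{ij}\big]\;\approx\;\frac{\delta^{2}}{2}\,w_{ij}\big(w_{ii}\,\E[N_j(T)]+w_{ij}\,\E[N_j(T)]-2w_{ji}\,\E[N_i(T)]\big).
\]
The point of using $X_{iij}-2X_{iji}+X_{jii}$ is precisely that the ``ambient'' contributions --- products of marginal rates, common causes, and $j\!\to\!i$ feedback --- enter with coefficients summing to zero. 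Now the dichotomy. If $(i,j)\notin G^*$ then $w_{ij}=w_{ji}=0$ and both means vanish. If $(i,j)\in G^*$, then either $\big|\E[S^{(1)}_{ij}]\big|\ge c\,\delta T$, so the pairwise test fires; or $\E[S^{(1)}_{ij}]$ is small, which forces $w_{ji}\,\E[N_i(T)]\approx w_{ij}\,\E[N_j(T)]$ with \emph{both} $w_{ij},w_{ji}>0$, and hence
\[
\E\big[S^{(2)}_{ij}\big]\;\approx\;\frac{\delta^{2}}{2}\,w_{ij}\,\E[N_j(T)]\,(w_{ii}-w_{ij})\;\ge\;\frac{\delta^{2}}{2}\,w_{\min}\,\mu_{\min}\,\wsep\,T\;>\;0,
\]
where $\E[N_j(T)]\ge\mu_{\min}T$ by Assumption~\ref{as:baseline}. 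The separation $w_{ii}-w_{ij}\ge\wsep$ of Assumption~\ref{as:mutual} is exactly what forbids the pairwise and higher-order signals from cancelling simultaneously; if only the $j\!\to\!i$ direction is present the mirror statistic $S^{(2)}_{ji}$ fires instead, and if exactly one direction is present the pairwise test already fires. Choosing $\tau_1,\tau_2$ strictly between the $\delta$-dependent error terms and the gaps above --- which is possible once $\delta$ is a small enough constant --- makes the decision rule correct in expectation for every pair.

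\emph{Step 2 (concentration and union bound).} Partition $[0,T]$ into $O(T/\delta)$ intervals of length $\delta$, and for each statistic $S$ consider the Doob martingale of $S$ with respect to the filtration generated by the successive intervals. On the event $\mathcal{E}$ that no interval carries more than $\log n$ events --- which has probability $1-o(1)$ by Assumption~\ref{as:stability}, since per-interval event counts have exponential tails --- each of the $O(T/\delta)$ martingale increments is a bounded power of nearby event counts and hence is $O(\poly\log n)$. Azuma's inequality then bounds the deviation of $S$ from its mean by $\exp\big(-\Omega(T/\poly\log n)\big)=o(n^{-3})$, because $T=\log^{100}n$; the lower bound $\E[N_i(T)]\ge\mu_{\min}\log^{100}n$ is what keeps the signal polylogarithmically above these fluctuations. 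A union bound over all $O(n^2)$ pairs and the $O(1)$ statistics per pair shows that, with probability $1-o(1)$, every statistic lies within $\frac{1}{2}\min(\tau_1,\tau_2)$ of its mean; on that event the rule from Step~1 outputs exactly $G^*$.

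\emph{Expected main obstacle.} The crux is Step~1: computing the means of the triple statistics and verifying the algebraic identity that cancels all interaction-free contributions in $S^{(2)}_{ij}$, while controlling the zoo of lower-order terms --- kernel non-flatness over the window (Assumptions~\ref{as:delay} and~\ref{as:smoothness}), additional events in the window (Assumption~\ref{as:sparsity}), and longer-range correlations mediated by the rest of the graph --- so that they remain dominated by the $\Omega(\delta T)$ and $\Omega(\delta^2 T)$ gaps. The final, decisive appeal to the locality hypothesis $w_{ii}-w_{ij}\ge\wsep$ also lives here; by contrast, the concentration and union-bound steps are routine once $\mathcal{E}$ is in place.
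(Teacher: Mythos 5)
Your proposal correctly identifies the key mechanism of the paper: use the antisymmetric pairwise statistic $X_{ij}-X_{ji}$ and the second-order statistic $X_{iij}-2X_{iji}+X_{jii}$, and invoke $w_{ii}-w_{ij}\ge\wsep$ (Assumption~\ref{as:mutual}) via the determinant/positivity argument to rule out the simultaneous cancellation of both signals when $w_{ij},w_{ji}>0$. The Azuma-plus-union-bound concentration in Step~2 is also the right shape, and your observation that the mirror statistic and the combined-threshold rule are interchangeable is correct.

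However, there is a genuine gap in your choice of window length. You set $\delta$ to be a \emph{constant} depending only on the model parameters. This does not work, for the following reason: the instantaneous rates $\lambda_i(t)$ are themselves stochastic, and $\sup_{0\le t\le T}\lambda_{\max}(t)$ is not bounded by a constant. Under Assumptions~\ref{as:baseline}--\ref{as:sparsity}, the tightest high-probability bound one can obtain is of the form $\sup_{0\le t\le T}\lambda_{\max}(t)\lesssim d^2\log^4(nT)$ (Lemma~\ref{lemma:maximum_rate}), which grows polylogarithmically in $n$. Your Step~1 error analysis implicitly assumes that the ``extra events in the window'' correction is a factor $O(\delta)$ smaller than the signal, but that correction actually scales like $(d\,\Lambda\,\delta)$ relative to the signal, where $\Lambda$ is the ambient rate. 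With $\delta$ a constant, $d\Lambda\delta$ tends to infinity rather than zero, so the expectation error per window (of order $\delta^3\Lambda^3$ for the pairwise statistic, $\delta^4\Lambda^4$ for the triples) \emph{dominates} the per-window signal (of order $\delta^2 w_{\min}\mu_{\min}$, resp.\ $\delta^3 w_{\min}\wsep\mu_{\min}$). The decision rule in Step~1 is then not ``correct in expectation.'' This same issue breaks your concentration event $\mathcal{E}$ (at most $\log n$ events per $\delta$-window): when the rate is $\poly\log n$ and $\delta$ is constant, the per-window counts have means $\sim\poly\log n$, so the event fails with non-negligible probability.

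The fix, which the paper makes and you would need, is to let the window length shrink polylogarithmically, e.g.\ $\epsilon=\log^{-17}(n)$, so that $d\Lambda\epsilon\to 0$ on the high-probability event controlling $\lambda_{\max}$; this is what makes the $(d\Lambda\epsilon)^{3}$ and $(d\Lambda\epsilon)^{4}$ error terms in Lemma~\ref{lemma:EX} genuinely lower-order. Establishing the polylogarithmic bound on $\sup_t\lambda_{\max}(t)$ --- which you do not mention --- is itself a non-trivial martingale-maximal-inequality argument (Lemmas~\ref{lemma:Mij_tail_bound}--\ref{lemma:Mij_maximal}) and is a structural prerequisite for the rest of your Step~1. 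A minor secondary point: in Step~2 you write that Azuma bounds ``the deviation of $S$ from its mean by $\exp(-\Omega(T/\poly\log n))$''; that quantity is the tail probability, not the deviation, which is $\widetilde{O}(\sqrt{T/\epsilon})$ --- but this is only a slip of phrasing, not a substantive gap.
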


We remark that observing the system for $T= \poly \log (n)$ time is a common requirement for accurate parameter estimation in Hawkes processes \cite{hansen2015lasso, bacry2020sparse, wang2024statistical, cai2022network}.
Here, we establish the same guarantee in a more general non-stationary setting, though we focus only on recovering $G^*$ rather than full parameter estimation.
Details about the estimator $\widehat{G}$ are in Section \ref{sec:alg}.

\section{Estimation algorithm}
\label{sec:alg}

\subsection{Analysis of pairwise events}
\label{subsec:pairwise_analysis}

Suppose we wish to detect whether there are mutual excitations between nodes $i$ and $j$.
A simple but crucial insight is that if $w_{ji} > 0$, each event at $i$ triggers an immediate jump in the rate of events at $j$. 
That is, if an $i$-event occurs at time $t$, then
\begin{equation}
\label{eq:lambda_j_jump}
\lambda_j(t) - \lambda_j(t^-) = w_{ji}.
\end{equation}
Due to this excitation, $j$-events occur with an elevated probability shortly after $i$-events (we refer to such a sequence of events as an $ij$-event). 
In particular, one may expect that $ij$ events occur much more frequently than $ji$ events (where a $j$-event occurs shortly before an $i$-event).

Unfortunately, a rigorous analysis of this idea shows that it fails in general to detect mutual excitations.
To formalize the approach, we define the indicator random variables 
\begin{align*}
X_{ij}(t, \epsilon) & : = \mathbf{1}( N_i[t, t + \epsilon] = 1, N_j[t + \epsilon, t + 2 \epsilon] = 1) \\
X_{ji}(t, \epsilon) & : = \mathbf{1} ( N_j[t, t + \epsilon] = 1, N_i[t + \epsilon, t + 2 \epsilon] = 1).
\end{align*}
The first indicator represents the possibility of observing an $ij$-event at time $t$, and the second represents the possibility of observing a $ji$-event at time $t$. 
The statistic of interest to us is 
\[
\Delta_{ij}^{(1)} (t, \epsilon) : = X_{ij}(t, \epsilon) - X_{ji}(t, \epsilon).
\]
It is useful to think of $\Delta_{ij}^{(1)}$ as the \emph{first-order derivative of $j$ with respect to $i$}, as it captures the difference in cases where a $j$-event occurs before and after an $i$-event. 

Under the assumption in Section \ref{subsec:hawkes}, we show that 
\begin{align}
\label{eq:Xij_explained}
\E [ X_{ij}(t, \epsilon) \vert \cF_t] & \approx \epsilon^2 \lambda_i(t) ( \lambda_j(t) + w_{ji}) \\
\label{eq:Xji_explained}
\E [ X_{ji}(t, \epsilon) \vert \cF_t] & \approx \epsilon^2 \lambda_j(t) ( \lambda_i(t) + w_{ij}),
\end{align}
where $\approx$ hides additive terms of lower order in $\epsilon$, and $\{ \cF_t \}_{t \ge 0}$ is the natural filtration.
Equation \eqref{eq:Xij_explained} can be explained as follows. The probability that an $i$-event occurs in $[t, t + \epsilon]$ is approximately $\epsilon \lambda_i(t)$ for sufficiently small $\epsilon$, and similarly, a $j$-event occurs in $[t + \epsilon, t + 2 \epsilon]$ with probability $\epsilon \lambda_j(t + \epsilon)$. However, due to the influence of the $i$-event on $j$ (as described in \eqref{eq:lambda_j_jump}), $\lambda_j(t + \epsilon )\approx \lambda_j(t) + w_{ji}$. Multiplying the two probabilities yields \eqref{eq:Xij_explained}. 
Note that this logic does not hold if there is a $k$-event in $[t, t + \epsilon]$ that also affects $\lambda_j(t)$. However, this would require three events to occur within a $O(\epsilon)$ time window (namely, $i,j,k$), which has a probability of lower order than $\epsilon^2$ if excitations are sparse (Assumption \ref{as:sparsity}).
Equation \eqref{eq:Xji_explained} follows from similar arguments.
For more details, see Lemma \ref{lemma:EX} in Section \ref{sec:alg_analysis}.

In light of \eqref{eq:Xij_explained} and \eqref{eq:Xji_explained}, we have 
\begin{equation}
\label{eq:Delta1_expectation}
\E [ \Delta_{ij}^{(1)}(t, \epsilon)  \vert \cF_t]/\epsilon^2  \approx  w_{ji} \lambda_i(t) - w_{ij} \lambda_j(t) .
\end{equation}
If neither process influences each other, i.e., $w_{ij} = w_{ji} = 0$, then $\Delta_{ij}^{(1)}$ has zero mean, reflecting that $ij$-events and $ji$-events occur with similar probability.
If $w_{ji} > 0$ while $w_{ij} = 0$, then the expectation reduces to $ w_{ji} \lambda_i(t)$, indicating that the influence of $i$ on $j$ increases the chance of an $ij$-event compared to $ji$-events. 
However, if $w_{ij}$ and $w_{ji}$ are both positive, there can be a confounding effect, where the probabilities of $ij$-events and $ji$-events \emph{both} increase, while their difference remains small. 
Consequently, counting $ij$-events and $ji$-events is not enough to conclusively determine mutual excitations.

\subsection{Analysis of event triples}

We show that the ambiguities of pairwise event counts can be fully resolved by counting triples of events. In particular, we study how $j$ reacts to \emph{two} $i$-events, rather than just one. There are three ways in which one $j$-event and two $i$-events can be ordered, namely $jii, iji$ and $iij$, which capture cases where $j$-events are affected by zero, one or two $i$-events. 
We proceed by analyzing indicator variables corresponding to each type of event sequence.
We let
\begin{align*}
X_{jii}(t, \epsilon) & : = \mathbf{1}( N_j[t, t + \epsilon] = 1, N_i[t + \epsilon, t + 2 \epsilon] = 1, \\
& \hspace{1cm} N_i[t + 2 \epsilon, t + 3 \epsilon] = 1 ),
\end{align*}
with similar definitions for $X_{iji}(t, \epsilon)$ and $X_{iij}(t, \epsilon)$.
We also define 
\begin{align*}
x_{jii}(t, \epsilon) & : = \lambda_j(t) ( \lambda_i(t) + w_{ij}) ( \lambda_i(t) + w_{ij} + w_{ii}) \\
x_{iji}(t, \epsilon) & : = \lambda_i(t) ( \lambda_j(t) + w_{ji} ) ( \lambda_i(t) + w_{ii} + w_{ij}) \\
x_{iij}(t, \epsilon) & : = \lambda_i(t) ( \lambda_i(t) + w_{ii}) ( \lambda_j(t) + 2 w_{ji}).
\end{align*}

Following similar reasoning as the derivation of \eqref{eq:Xij_explained} and \eqref{eq:Xji_explained}, we can show that $\E [ X_{jii}(t, \epsilon ) \vert \cF_t] \approx \epsilon^3 x_{jii}(t, \epsilon)$, $\E [ X_{iji}(t, \epsilon) \vert \cF_t] \approx \epsilon^3 x_{iji}(t, \epsilon)$, and $\E [ X_{iij}(t, \epsilon) \vert \cF_t] \approx \epsilon^3 x_{iij}(t, \epsilon)$ where, as before, $\approx$ hides additive terms of lower order in $\epsilon$ (see Lemma \ref{lemma:EX} in Section \ref{sec:alg_analysis} for more details). 
We then define
\begin{equation*}
\Delta_{ij}^{(2)}(t, \epsilon) : = X_{iij}(t, \epsilon) - 2 X_{iji}(t, \epsilon) + X_{jii}(t, \epsilon).
\end{equation*}
This statistic is evocative of a \emph{second derivative}: it aims to capture the impact of $j$-events interacting with two $i$-events, while subtracting out the impact of $ii$, $ij$ or $ji$ events.
Additionally, it follows readily that
\begin{equation}
\label{eq:Delta2_expectation}
\frac{\E [ \Delta_{ij}^{(2)}(t, \epsilon) \vert \cF_t]}{\epsilon^3}  \approx w_{ij} ( w_{ii} + w_{ij}) \lambda_j(t) - 2 w_{ij} w_{ji} \lambda_i(t).
\end{equation}
Importantly, \eqref{eq:Delta2_expectation} shows that the conditional expectation of $\Delta_{ij}^{(2)}$ is also a linear combination of $\lambda_i(t)$ and $\lambda_j(t)$, which leads to a convenient joint characterization of the first and second derivatives. Indeed, defining the matrix 
\[
M : = \begin{pmatrix}
w_{ji} & - w_{ij} \\
-2 w_{ij} w_{ji} & w_{ij} (w_{ii} + w_{ij})
\end{pmatrix},
\]
equations \eqref{eq:Delta1_expectation} and \eqref{eq:Delta2_expectation} lead to 
\begin{equation}
\label{eq:Delta_matrix_equation}
\begin{pmatrix}
\E [ \Delta_{ij}^{(1)}(t, \epsilon) \vert \cF_t] / \epsilon^2 \\
\E [ \Delta_{ij}^{(2)}(t, \epsilon)  \vert \cF_t] / \epsilon^3
\end{pmatrix} 
\approx 
M 
\begin{pmatrix}
    \lambda_i(t) \\
    \lambda_j(t)
\end{pmatrix} = : \mathbf{v}.
\end{equation}

Let us now revisit the case where $w_{ij}$ and $w_{ji}$ are both positive, which may cause ambiguities in pairwise event counts.
It can be seen that $\det(M) = w_{ij} w_{ji} (w_{ii} - w_{ij}) > 0$ by Assumption \ref{as:mutual}, hence $M$ is non-degenerate. Thus, since $\lambda_i(t)$ and $\lambda_j(t)$ are both positive, it follows that $\| v \|_2 > 0$, meaning that at least one entry of $v$ is bounded away from zero. Consequently, the conditional expectation of $\Delta_{ij}^{(2)}(t,\epsilon)$ is able to resolve any ambiguities left open by $\Delta_{ij}^{(1)}(t,\epsilon)$. 

\subsection{Algorithm description}

The insights of the previous subsections show that for any $t \ge 0$, the conditional expectations of $\Delta_{ij}^{(1)}(t, \epsilon)$ and $\Delta_{ij}^{(2)}(t, \epsilon)$ can \emph{jointly} reveal whether there exist mutual excitations between $i$ and $j$. To extract this information empirically, we sum over many values of $t$ in order to boost the signal while reducing the relative variance.
Formally, for a time horizon $T \ge 0$ and $\epsilon > 0$, we define 
\begin{align}
\label{eq:Dij1}
D_{ij}^{(1)}(T, \epsilon) & : = \sum_{k = 0}^{ \lfloor \frac{T}{3 \epsilon} \rfloor} \Delta_{ij}^{(1)}(k\epsilon, \epsilon) \\
\label{eq:Dij2}
D_{ij}^{(2)} (T, \epsilon) & : = \sum_{k = 0}^{\lfloor \frac{T}{3 \epsilon} \rfloor} \Delta_{ij}^{(2)}(k\epsilon, \epsilon).
\end{align}
In the statistics above, we sum over time increments separated by $3\epsilon$ so that the $\Delta_{ij}^{(2)}(k\epsilon,\epsilon)$'s do not contain overlapping information. 
Algorithm \ref{alg:structure_learning} below shows how these statistics are used to detect mutual excitations.

\begin{breakablealgorithm}
\caption{Detecting mutual excitations}
\label{alg:structure_learning}
\begin{algorithmic}[1]
\Require{Point processes $N_1, \ldots, N_n$ observed in the interval $[0,T]$, parameter $\epsilon > 0$, threshold $h > 0$.}
\Ensure{A graph $\widehat{G}$ with vertex set $[n]$.}
\State Initialize $\widehat{G}$ to be the empty graph on $[n]$.
\State For each pair of distinct $i,j \in [n]$, add $(i,j)$ to $\widehat{G}$ if 
\begin{equation}
\label{eq:statistic_sum}
\frac{|D_{ij}^{(1)}(T, \epsilon)|}{T \epsilon} + \frac{|D_{ij}^{(2)}(T, \epsilon)|}{T \epsilon^2} \ge h.
\end{equation}
\State Return $\widehat{G}$.
\end{algorithmic}
\end{breakablealgorithm}

The following result shows that with the right values of $(T, \epsilon, h)$, the algorithm returns the correct answer with high probability. The proof can be found in Section \ref{sec:alg_analysis}.

\begin{theorem}
\label{thm:alg}
If $T = \log^{100}(n), \epsilon = \log^{-17}(n)$ and $h = w_{\min} \wsep \mu_{\min} / 8$, then Algorithm \ref{alg:structure_learning} outputs $\widehat{G}$ satisfying $\widehat{G} = G^*$ with probability $1 - o(1)$.
\end{theorem}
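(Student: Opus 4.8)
The strategy is a standard signal-versus-noise decomposition applied to the two test statistics. First, I would establish a rigorous version of the heuristic expansions in \eqref{eq:Xij_explained}--\eqref{eq:Xji_explained} and their triple analogues, i.e., prove Lemma~\ref{lemma:EX}: for $t = k\epsilon$ and $\epsilon$ small,
\[
\E[\Delta_{ij}^{(1)}(t,\epsilon)\mid\cF_t] = \epsilon^2\big(w_{ji}\lambda_i(t) - w_{ij}\lambda_j(t)\big) + O(\epsilon^3 \cdot \mathrm{poly}(\lambda)),
\]
and similarly $\E[\Delta_{ij}^{(2)}(t,\epsilon)\mid\cF_t] = \epsilon^3\big(w_{ij}(w_{ii}+w_{ij})\lambda_j(t) - 2w_{ij}w_{ji}\lambda_i(t)\big) + O(\epsilon^4\cdot\mathrm{poly}(\lambda))$. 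The error terms come from (a) the event that three or more events land in a window of size $O(\epsilon)$, whose probability is $O(\epsilon^3)$ times a degree- and $w_{\max}$-dependent factor by sparsity, and (b) the time-variation of $\mu_i$ and $\phi_{ij}$ over an $O(\epsilon)$ window, controlled by Assumption~\ref{as:smoothness}. I would also record the crude a.s. upper bound $\lambda_i(t) \le \Lambda_{\max}$ for some $\Lambda_{\max} = O(\mathrm{poly}\log n)$ that follows from Assumptions~\ref{as:baseline}, \ref{as:stability}, \ref{as:mutual} and \ref{as:sparsity} (each node receives excitation from at most $d$ neighbors, each bounded), so that all the $\mathrm{poly}(\lambda)$ factors are under control.

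Next, the concentration step. Write $D_{ij}^{(\ell)}(T,\epsilon) = \sum_k \Delta_{ij}^{(\ell)}(k\epsilon,\epsilon)$ as a sum of $\lfloor T/3\epsilon\rfloor$ terms; because consecutive time blocks are spaced by $3\epsilon$ (resp. the blocks for $\Delta^{(2)}$ use disjoint sub-intervals), the summands, conditioned on the filtration at the start of each block, have controlled conditional means and are bounded by $1$. I would form the martingale $D_{ij}^{(\ell)} - \sum_k \E[\Delta_{ij}^{(\ell)}(k\epsilon,\epsilon)\mid\cF_{k\epsilon}]$ and apply Freedman's / Azuma-type inequality for martingales with bounded increments: the quadratic variation is at most $\sum_k \E[(\Delta_{ij}^{(\ell)})^2\mid\cF] = O((T/\epsilon)\cdot\epsilon^2\Lambda_{\max}^2) $ for $\ell=1$ (and $O((T/\epsilon)\epsilon^3\Lambda_{\max}^3)$ for $\ell=2$), giving deviations of order $\sqrt{T\epsilon\,\Lambda_{\max}^2\log n}$ and $\sqrt{T\epsilon^2\Lambda_{\max}^3\log n}$ respectively, after a union bound over the $\binom n2$ pairs. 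Dividing by the normalizations $T\epsilon$ and $T\epsilon^2$ from \eqref{eq:statistic_sum}, the stochastic fluctuation in each normalized statistic is $O\big(\sqrt{\Lambda_{\max}^{3}\log n/(T\epsilon)}\big)$, which with $T = \log^{100} n$, $\epsilon = \log^{-17}n$ and $\Lambda_{\max} = \mathrm{poly}\log n$ is $o(1)$ — in particular $\ll h = w_{\min}\wsep\mu_{\min}/8$, a constant.

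Then the two-case analysis. If $(i,j)\notin G^*$, i.e., $w_{ij}=w_{ji}=0$, then the main terms above vanish exactly, the bias from Lemma~\ref{lemma:EX} summed and normalized is $O(\epsilon \Lambda_{\max}^{\mathrm{const}}) = o(1)$, so the left side of \eqref{eq:statistic_sum} is $o(1) < h$ and no edge is added. If $(i,j)\in G^*$, then by the determinant computation $\det M = w_{ij}w_{ji}(w_{ii}-w_{ij})$ already in the text — together with the case where only one of $w_{ij},w_{ji}$ is nonzero, handled directly — the vector $M(\lambda_i(t),\lambda_j(t))^\top$ has $\ell_2$-norm, hence $\ell_\infty$-norm, bounded below by a constant times $\mu_{\min}$ times the relevant $w$-quantities; concretely at least one of $|w_{ji}\lambda_i(t)-w_{ij}\lambda_j(t)|$ and $|w_{ij}(w_{ii}+w_{ij})\lambda_j(t)-2w_{ij}w_{ji}\lambda_i(t)|$ is $\gtrsim w_{\min}\wsep\mu_{\min}$ at every $t$ (this uses $\lambda_i,\lambda_j\ge\mu_{\min}$ and that the sign/magnitude cannot simultaneously collapse both coordinates when $M$ is well-conditioned). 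Summing the corresponding $\Delta^{(\ell)}$ over $k$ and normalizing gives a main term of absolute value $\gtrsim w_{\min}\wsep\mu_{\min} > 2h$ in at least one of the two summands of \eqref{eq:statistic_sum}; subtracting the $o(1)$ bias and the $o(1)$ fluctuation still leaves it $\ge h$, so the edge is added. A union bound over all pairs completes the proof.

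\textbf{Main obstacle.} The delicate point is the lower bound in the connected case: one must show that the two linear functionals $w_{ji}\lambda_i - w_{ij}\lambda_j$ and $w_{ij}(w_{ii}+w_{ij})\lambda_j - 2w_{ij}w_{ji}\lambda_i$ cannot \emph{both} be simultaneously tiny, uniformly over $t$ and over all admissible $(w_{ij},w_{ji})$ with the separation $w_{ii}-w_{ij}\ge\wsep$. Non-degeneracy of $M$ gives this pointwise, but one needs a quantitative version: $\|M^{-1}\|$ must be bounded in terms of $w_{\min},w_{\max},\wsep$ so that $\|v\|_\infty \ge \|M^{-1}\|^{-1}\|(\lambda_i,\lambda_j)\|_\infty \ge \|M^{-1}\|^{-1}\mu_{\min}$, and one must confirm the resulting constant is indeed comfortably larger than the chosen $h$. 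The boundary case where exactly one of $w_{ij}, w_{ji}$ vanishes needs to be checked separately since then $\det M = 0$; there the first-order term alone already gives $|w_{ji}\lambda_i| \ge w_{\min}\mu_{\min}$ (or symmetrically), which suffices. The second source of friction — making the $O(\epsilon^{k+1})$ error bounds in Lemma~\ref{lemma:EX} fully rigorous with explicit $\mathrm{poly}\log n$ constants, so that $\epsilon = \log^{-17}n$ really does kill the bias — is routine but tedious; it is essentially an inclusion–exclusion over which nodes fire in each sub-window, controlled by Assumptions~\ref{as:delay}, \ref{as:smoothness} and \ref{as:sparsity}.
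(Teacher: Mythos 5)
Your overall architecture — (i) rigorous conditional-expectation expansions for $\Delta_{ij}^{(1)}, \Delta_{ij}^{(2)}$, (ii) martingale concentration over $\lfloor T/(3\epsilon)\rfloor$ disjoint blocks, (iii) a case split on $(w_{ij},w_{ji})$ with the connected case controlled via the non-degeneracy of $M$ — matches the paper's. However, there is one step where your proof would break, and it is precisely the step you dismiss as following easily from the assumptions.

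\textbf{The gap: no almost-sure bound on the rate.} You write that a ``crude a.s. upper bound $\lambda_i(t)\le\Lambda_{\max}$ for some $\Lambda_{\max}=O(\mathrm{poly}\log n)$ follows from Assumptions \ref{as:baseline}, \ref{as:stability}, \ref{as:mutual}, \ref{as:sparsity}.'' This is false. The rate $\lambda_i(t)=\mu_i(t)+\sum_j w_{ij}\int_0^t\phi_{ij}(t,s)\,dN_j(s)$ is itself a stochastic process driven by the event history, and the integral can be arbitrarily large: sparsity bounds the \emph{number} of contributing neighbors, not the \emph{number of events} each neighbor produces in a short window. On rare sample paths the rate can spike to any level; Assumption \ref{as:stability} only keeps this from happening too often. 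The correct statement is Lemma \ref{lemma:maximum_rate}, which gives $\sup_{0\le t\le T}\lambda_{\max}(t)\le d^2\log^4(nT)$ \emph{with high probability}, and its proof (Appendix B) is the most technically involved part of the paper: one decomposes $\lambda_i$ into a drift term (controlled deterministically by stability) plus the martingales $M_{ij}(t)=\int_0^t\phi_{ij}(t,s)\,dN_j(s)-\int_0^t\phi_{ij}(t,s)\lambda_j(s)\,ds$, bounds them via Freedman's inequality for continuous-time martingales, controls their oscillation on a fine grid, and then runs a self-referential argument on the stopping time $\tau=\inf\{s:\lambda_{\max}(s)\ge\Lambda\}$ to conclude $\tau>T$ w.h.p. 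Your proposal, by positing a deterministic bound, silently skips what is really the crux. Moreover, all of Lemma \ref{lemma:EX} and your concentration step need this rate bound as an \emph{input} (the error term in Lemma \ref{lemma:EX} is $C(d\Lambda\epsilon)^{k+1}$ and the Azuma variance proxy scales with $\Lambda$), so the gap propagates.

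\textbf{A smaller, fixable difference.} For the connected case you appeal to $\|M^{-1}\|$ being bounded to force one of the two coordinates of $M(\overline\Lambda_i,\overline\Lambda_j)^\top$ to be $\gtrsim w_{\min}\wsep\mu_{\min}$, and you correctly flag that $\det M=0$ when exactly one of $w_{ij},w_{ji}$ vanishes and needs separate treatment. The paper avoids the operator-norm bookkeeping (and the extra boundary case) by a direct algebraic manipulation: setting $A:=w_{ji}\overline\Lambda_i-w_{ij}\overline\Lambda_j$, one observes
\[
w_{ij}\big((w_{ij}+w_{ii})\overline\Lambda_j-2w_{ji}\overline\Lambda_i\big)=2w_{ij}\Big(\tfrac{(w_{ii}-w_{ij})\overline\Lambda_j}{2}-A\Big),
\]
so the sum of the two normalized statistics is at least $w_{\min}\bigl(|A|+\bigl|\tfrac{(w_{ii}-w_{ij})\overline\Lambda_j}{2}-A\bigr|\bigr)\ge w_{\min}\tfrac{(w_{ii}-w_{ij})\overline\Lambda_j}{2}\ge w_{\min}\wsep\mu_{\min}/8$ by the triangle inequality, valid whenever $w_{ij}\ge w_{\min}$ regardless of whether $w_{ji}=0$. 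Both arguments work, but the paper's is shorter and makes the constant $h$ appear explicitly. Also note that your pointwise claim ``at every $t$'' should really be phrased for the aggregates $\overline\Lambda_i,\overline\Lambda_j$; by linearity this is equivalent, but as stated it invites the spurious worry of sign cancellation across $t$.
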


We make a few remarks about Algorithm \ref{alg:structure_learning} and Theorem \ref{thm:alg}.
Unlike the prior work \cite{cai2022network, chen2013inference, roueff2016locally, chen2023detection}, our algorithm directly estimates $G^*$ without needing to estimate baseline rates and delay kernels. 
Our algorithm offers two additional strengths: it recovers the correct subgraph when only a subset of nodes is observed since the edge $(i, j)$ depends solely on $N_i$ and $N_j$, and its use of quantized event times ensures robustness against small adversarial perturbations within $\epsilon$-sized intervals.

\section{Analysis of Algorithm \ref{alg:structure_learning}}
\label{sec:alg_analysis}

In this section, we prove our main result, Theorem \ref{thm:alg}. To do so, we state and explain a few technical building blocks. 
The first supporting result characterizes the expectation of indicators corresponding to short waiting times. 
To state the result, for $i,j \in [n]$ we define $x_{ij}(t, \epsilon) : = \lambda_i(t) ( \lambda_j(t) + w_{ji})$, and for (not necessarily distinct) $i,j,k \in [n]$ we define $x_{ijk}(t, \epsilon) : = \lambda_i(t) ( \lambda_j(t) + w_{ji}) ( \lambda_k(t) + w_{ki} + w_{kj} )$.

\begin{lemma}
\label{lemma:EX}
Let $t \ge 0$, let $\epsilon \in (0,1]$, and assume that $\lambda_{\max}(t) \le \Lambda$.
There is a sufficiently large constant $C > 0$ depending on $\mu_{\max}, w_{\max}$ and $\cderivative$ such that if $d \Lambda \epsilon \le 1 / C, \epsilon \ge n^{-1/4}$ and $\Lambda \ge C d \log n$, then the following hold:
\begin{align}
\label{eq:xij}
\left| \E[X_{ij}(t, \epsilon) \vert \cF_t] - \epsilon^2 x_{ij}(t, \epsilon) \right|& \le C ( d \Lambda \epsilon)^3, \\
\label{eq:xijk}
\left| \E [ X_{ijk}(t, \epsilon) \vert \cF_t] - \epsilon^3 x_{ijk}(t, \epsilon) \right| & \le C ( d \Lambda \epsilon)^4.
\end{align}
\end{lemma}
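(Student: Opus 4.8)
The plan is to condition on $\cF_t$, extract from each indicator a single ``clean'' configuration that accounts for the main term, and bound everything else as a lower-order error. Let $S$ be the set of nodes whose events can move $\lambda_i,\lambda_j$ (resp.\ $\lambda_i,\lambda_j,\lambda_k$): namely $S=\{i,j\}\cup\{l:w_{il}>0\text{ or }w_{jl}>0\}$ in the pair case, so $|S|\le 2+2d$ by Assumptions~\ref{as:sparsity} and~\ref{as:mutual}, with the analogous $S$ of size $\le 3+3d$ in the triple case; events at nodes outside $S$ do not affect $X_{ij}$ (resp.\ $X_{ijk}$) given $\cF_t$. Let $E_0$ be the event that exactly one $i$-event occurs in $[t,t+\epsilon]$, exactly one $j$-event in $[t+\epsilon,t+2\epsilon]$, and no further $S$-event anywhere in $[t,t+2\epsilon]$ (with an added third window and a $k$-event in the triple case). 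Then $E_0\subseteq\{X_{ij}=1\}$, and since $\{X_{ij}=1\}$ already forces two $S$-events in $[t,t+2\epsilon]$ while any departure from $E_0$ forces a third, we get $\{X_{ij}=1\}\setminus E_0\subseteq\{N_S([t,t+2\epsilon])\ge 3\}$ (and $\ge 4$ for the triple). It therefore suffices to prove (i) $\Pr[E_0\mid\cF_t]=\epsilon^2 x_{ij}(t,\epsilon)+O((d\Lambda\epsilon)^3)$ and (ii) $\Pr[N_S([t,t+2\epsilon])\ge 3\mid\cF_t]=O((d\Lambda\epsilon)^3)$, together with their triple analogues.

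For (i) I would compute $\Pr[E_0\mid\cF_t]$ by conditioning window by window. On $E_0$ the superposed $S$-intensity $\lambda_S(\cdot)$ is, within each window, a deterministic function of $\cF_t$ and the (at most one) already-realized event time, so each window contributes an inhomogeneous-Poisson factor equal to the probability that the next $S$-event is the prescribed one at the prescribed node with no $S$-event following it in that window. Assumption~\ref{as:smoothness} gives $\lambda_i(v)=\lambda_i(t)(1+O(L\epsilon))$ over $[t,t+\epsilon]$, and $d\Lambda\epsilon\le 1/C$ together with $\Lambda\ge Cd\log n$ (forcing $w_{\max}=o(\Lambda)$) keep $\lambda_S=O(d\Lambda)$ after a bounded number of events; so the first window contributes $\epsilon\lambda_i(t)(1+O(d\Lambda\epsilon+L\epsilon))$. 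Entering the second window, the lone $i$-event at time $u_1$ raises $\lambda_j$ by $w_{ji}\phi_{ji}(t+\epsilon,u_1)=w_{ji}(1+O(L\epsilon))$ using $\phi_{ji}(s,s)=1$ and Assumption~\ref{as:smoothness}, so $\lambda_j(t+\epsilon)=(\lambda_j(t)+w_{ji})(1+O(L\epsilon))$ and the second window contributes $\epsilon(\lambda_j(t)+w_{ji})(1+O(d\Lambda\epsilon+L\epsilon))$, uniformly in $u_1$. Multiplying, $\Pr[E_0\mid\cF_t]=\epsilon^2\lambda_i(t)(\lambda_j(t)+w_{ji})(1+O(d\Lambda\epsilon+L\epsilon))$, whose error is $O(d\Lambda^3\epsilon^3+L\Lambda^2\epsilon^3)=O((d\Lambda\epsilon)^3)$ once $C$ is large enough that $\Lambda\ge Cd\log n$ implies $L\le\Lambda$ (using $d\ge1$). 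The triple case is the same computation with a third window and the jump $\lambda_k(t+2\epsilon)=(\lambda_k(t)+w_{ki}+w_{kj})(1+O(L\epsilon))$, giving $\epsilon^3 x_{ijk}(t,\epsilon)$ with error $O((d\Lambda\epsilon)^4)$.

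For (ii) I would pass to the third factorial moment, $\Pr[N_S([t,t+2\epsilon])\ge3\mid\cF_t]\le\tfrac16\E[N_S(N_S-1)(N_S-2)\mid\cF_t]$, and iterate the identity $\E[dN_S(v)\mid\cF_{v^-}]=\lambda_S(v)\,dv$ to write the right-hand side as an integral of the causal three-point intensity over $\{t<v_1<v_2<v_3<t+2\epsilon\}$. The crux is the bound $\E[\lambda_l(v)\mid\cF_t]\le(\Lambda/\eta)(1+O(L\epsilon))$ for every node $l$ and $v\in[t,t+3\epsilon]$: with $g_l(v)=\E[\lambda_l(v)\mid\cF_t]$, Assumption~\ref{as:delay} bounds the pre-$t$ contribution by $\Lambda(1+O(L\epsilon))$ and the stability bound $\sum_a w_{la}\int\phi_{la}\le 1-\eta$ (Assumption~\ref{as:stability}) closes the fixed-point inequality $\sup_{l,v}g_l(v)\le\Lambda(1+O(L\epsilon))+(1-\eta)\sup_{l,v}g_l(v)$. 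Running the same argument after conditioning on one or two extra $S$-events (each of which contributes at most $w_{\max}$ directly, hence $O(w_{\max}/\eta)$ through its descendants) bounds the three-point intensity by $O((d\Lambda/\eta)^3)$, and integrating over a simplex of volume $(2\epsilon)^3/6$ gives $O((d\Lambda\epsilon)^3)$ with $C$ absorbing $\eta$. Combining (i) and (ii) proves the pair bound; the triple bound uses the fourth factorial moment and $N_S\ge4$.

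The main obstacle is (ii): $S$ is not a closed subsystem --- the intensity of an in-neighbor of $i$ can be inflated by events at nodes outside $S$ --- so $\lambda_S$ cannot be bounded pathwise by a deterministic constant, and enlarging $S$ to its neighborhood would degrade the error from $(d\Lambda\epsilon)^3$ to $(d^2\Lambda\epsilon)^3$. The sub-criticality fixed point for $\E[\lambda_l(v)\mid\cF_t]$, which uses only Assumption~\ref{as:stability} and is insensitive to $n$, is what makes the factorial-moment estimate go through. A secondary, purely mechanical point is propagating the $O(L\epsilon)$ and $O(d\Lambda\epsilon)$ relative errors through the window-by-window product and checking that they are dominated by $(d\Lambda\epsilon)^3$ resp.\ $(d\Lambda\epsilon)^4$ under the stated conditions on $\Lambda$, $\epsilon$ and $d$.
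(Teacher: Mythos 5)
Your plan shares the paper's structural idea (isolate the clean event sequence, Taylor-expand the intensity jumps, push everything else into an $O((d\Lambda\epsilon)^{k+1})$ remainder), and part (i) is essentially Lemma~\ref{lemma:Ti_probability} combined with the induction in the paper's proof of Lemma~\ref{lemma:EX}. But your assertion that ``on $E_0$ the superposed $S$-intensity $\lambda_S(\cdot)$ is a deterministic function of $\cF_t$ and the realized event time'' is not correct: nodes $l\in S\setminus\{i,j\}$ have in-neighbors that need not lie in $S$, so $\lambda_l$ (hence $\lambda_S$) is still random even after you condition on the $S$-history. The paper is careful about exactly this: Lemma~\ref{lemma:Ti_probability} couples only $T_i(t)-t$ (whose intensity depends on $\cN(i)\subseteq S$ alone) with a deterministic inhomogeneous Poisson, and does so only on the event that $\cN(i)\setminus\{i\}$ is silent, pushing the cost of that event into the error via Lemma~\ref{lemma:multiple_firings}. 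Your window-by-window product of ``inhomogeneous-Poisson factors'' needs the same kind of coupling argument, node by node, to be airtight.

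Part (ii) is where the genuine gap is, and it is also where your route diverges from the paper. The paper defines $\tau := \inf\{s\ge t : \lambda_{\max}(s)\ge 2\Lambda\}$, shows $\p(\tau\le t+\epsilon\mid\cF_t)=o(1/n)$ on $\{\lambda_{\max}(t)\le\Lambda\}$ by a direct Poisson tail (Lemma~\ref{lemma:short_term_rate_bound}), and then stochastically dominates the stopped process $N_S(\cdot\wedge\tau)$ by a Poisson of rate $2|S|\Lambda$ (Lemma~\ref{lemma:multiple_firings}); sub-criticality is never invoked. Your factorial-moment plus fixed-point route has a hole in the iteration: when you peel off the inner integral in $\E\bigl[\int\!\int\!\int dN_S\,dN_S\,dN_S\mid\cF_t\bigr]$, the factor that appears is $\E[\lambda_S(v_3)\mid\cF_{v_2}]$, not $\E[\lambda_S(v_3)\mid\cF_t,\ \text{events at }v_1,v_2]$. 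The former depends on the random quantity $\lambda_{\max}(v_2)$, which can exceed $\Lambda$, so the $\Lambda/\eta$ fixed-point bound you derived at time $t$ does not simply transfer; the phrase ``through its descendants'' gestures at a Palm/cluster-representation argument that would be needed to make this rigorous, and developing it for non-shift-invariant kernels is substantially more work than the paper's two lemmas. Finally, the obstacle you flag (``$S$ is not a closed subsystem, so enlarging it costs $d^2$'') is not actually forced: the paper's stopping time controls $\lambda_{\max}$ over all $n$ nodes simultaneously (the hypothesis $\lambda_{\max}(t)\le\Lambda$ is already global), so the Poisson domination applies directly to $N_S$ with $|S|=O(d)$ and the $(d\Lambda\epsilon)^k$ rate comes out without the fixed point.
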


The intuition behind the result can be found in Section \ref{subsec:pairwise_analysis} (see in particular the discussion following equations \eqref{eq:Xij_explained} and \eqref{eq:Xji_explained}). Proof details can be found in Appendix \ref*{sec:short_waiting_times}.

Crucially, the error term in Lemma \ref{lemma:EX} depends on $\lambda_{\max}(t)$. In our next result, we establish an upper bound for this quantity over the entire observation window.

\begin{lemma}
\label{lemma:maximum_rate}
With probability tending to 1 as $nT \to \infty$, it holds that $\sup_{0 \le t \le T} \lambda_{\max}(t) \le d^2 \log^4 (nT)$.
\end{lemma}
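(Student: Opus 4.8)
The plan is to control $\lambda_{\max}(t) = \max_{i \in [n]} \lambda_i(t)$ uniformly on $[0,T]$ by bootstrapping: if the rate stays below a threshold $\Lambda$ up to time $t$, then the number of events each node can generate in a recent window is small, which in turn keeps $\lambda_i(t)$ below $\Lambda$. First I would fix a candidate level $\Lambda = d^2 \log^4(nT)$ and define the stopping time $\tau := \inf\{ t \ge 0 : \lambda_{\max}(t) > \Lambda \}$; the goal is to show $\tau > T$ with high probability. On the event $\{\tau > t\}$, every $\lambda_i$ has been at most $\Lambda$ on $[0,t]$, so for any node $j$ and any window $[t - \delta, t]$ the count $N_j[t-\delta, t]$ is stochastically dominated by a Poisson random variable with mean $\Lambda \delta$.

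The second step is to convert this into a pointwise bound on $\lambda_i(t)$ using the representation \eqref{eq:multivariate_hawkes}. I would split the integral $\int_0^t \phi_{ij}(t,s) \, dN_j(s)$ into a recent part $s \in [t-\delta, t]$ and an old part $s \in [0, t-\delta]$. By Assumption~\ref{as:delay}, $\phi_{ij}(t,s) \le 1$, so the recent part contributes at most $N_j[t-\delta,t]$; by Assumption~\ref{as:smoothness}, $\phi_{ij}(t,s) \le e^{-\cderivative(t-s)}$ wait --- actually the smoothness bound gives $\phi_{ij}(t,s) \le \phi_{ij}(s,s) e^{\cderivative(t-s)} = e^{\cderivative(t-s)}$ as an \emph{upper} bound, which is the wrong direction, so instead I would use Assumption~\ref{as:stability} directly: $\sum_j w_{ij} \int_0^t \phi_{ij}(t,x)\,dx \le 1$, and bound the old part of the integral crudely by noting it is at most (max rate of events before $t-\delta$) times $\Phi$, then feed that back. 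A cleaner route: iterate the rate equation. Since $\mathbb{E}[\lambda_i(t) \mid \mathcal{F}_s] \le \mu_{\max} + \sum_j w_{ij}\int_0^t \phi_{ij}(t,x) \mathbb{E}[\lambda_j(x)\mid\mathcal{F}_s]\,dx$, a Grönwall/fixed-point argument gives that the \emph{expected} rate is bounded by a constant $\mu_{\max}/\eta$ uniformly; the fluctuations above this mean are what need the high-probability argument.

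The third step is the union bound and discretization. I would take $\delta = 1/\Lambda$ (or a small polynomial in $1/\log(nT)$), so that $\Lambda\delta = O(1)$ and the Poisson count $N_j[t-\delta,t]$ exceeds $\log(nT)$ with probability at most $(nT)^{-\omega(1)}$ by a Chernoff bound. To make this uniform over $t \in [0,T]$, discretize $[0,T]$ into $O(T/\delta) = \poly(nT)$ grid points, use monotonicity of counts in the window endpoints to bridge between grid points, and union bound over the grid and over $j \in [n]$. On the complement of this rare event, each $\lambda_i(t)$ built from these controlled counts stays below $\Lambda$ for a suitable choice of constants inside the $\log^4$, which contradicts $\tau \le T$; hence $\tau > T$ with probability $1 - o(1)$ as $nT \to \infty$.

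The main obstacle is the self-referential structure: the Poisson domination of $N_j[t-\delta,t]$ is only valid \emph{on} the event $\{\tau > t\}$, so the bootstrap must be set up carefully --- e.g.\ by working with the stopped process $\lambda_i(t \wedge \tau)$ and a martingale/optional-stopping argument, or by an induction on a geometrically spaced sequence of times --- to avoid circularity. A secondary technical point is handling the contribution of events older than the recent window: one must argue that the accumulated influence $\sum_j w_{ij}\int_0^{t-\delta}\phi_{ij}(t,x)\,dN_j(x)$ is itself $O(\mu_{\max}/\eta + \poly\log)$ with high probability, which again uses Assumption~\ref{as:stability} together with the fact that the total number of events in $[0, T]$ is, on the good event, at most $n T \Lambda = \poly(nT)$, so a single union bound over all events suffices.
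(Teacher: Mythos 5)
Your proposal shares the broad scaffolding of the paper's proof — fix $\Lambda = d^2 \log^4(nT)$, introduce the stopping time $\tau = \inf\{t : \lambda_{\max}(t) \ge \Lambda\}$, discretize $[0,T]$ into a grid, union bound, and close with a self-consistency/bootstrap argument showing $\tau > T$ — but the central analytic step does not close as written.

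The gap is in how you control the contribution of \emph{older} events to $\lambda_i(t)$. You split $\int_0^t \phi_{ij}(t,s)\,dN_j(s)$ into a recent window $[t-\delta,t]$ (bounded by a Poisson count) and an old part $[0,t-\delta]$, and you propose to bound the old part either crudely by ``$\Phi \times$ total number of events'' or via a Gr\"onwall argument on $\mathbb{E}[\lambda_i]$. Neither works. The stability assumption bounds $\int_0^t \phi_{ij}(t,x)\,dx$ — an integral against Lebesgue measure — and gives you no direct handle on $\int_0^{t-\delta}\phi_{ij}(t,x)\,dN_j(x)$, which could be huge if events cluster near $t-\delta$ where $\phi_{ij}\approx 1$; multiplying $\Phi$ by the total event count $nT\Lambda = \poly(nT)$ yields a bound that is far too large to feed back into the bootstrap (you need the old part to be at most roughly $(1-\eta)\Lambda$, not $\poly(nT)$). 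And the Gr\"onwall route only controls $\mathbb{E}[\lambda_i(t)]$, whereas you need a pointwise high-probability bound on a path-dependent functional of $dN_j$; you acknowledge this tension but don't supply the missing tool.

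The paper's resolution is the \emph{compensated-integral martingale} decomposition: writing $\lambda_i(t) = \mu_i(t) + \sum_j w_{ij}\int_0^t \phi_{ij}(t,s)\lambda_j(s)\,ds + \sum_j w_{ij} M_{ij}(t)$ with $M_{ij}(t) := \int_0^t \phi_{ij}(t,s)\,dN_j(s) - \int_0^t \phi_{ij}(t,s)\lambda_j(s)\,ds$. On the stopped process, the integral term is $\le (1-\eta)\Lambda$ by Assumption~\ref{as:stability} applied with $\lambda_j \le \Lambda$, and the residual $M_{ij}(t\wedge\tau)$ is controlled by Freedman's inequality (Lemma~\ref{lemma:Mij_tail_bound}), since for each fixed $t$ the process $s \mapsto \int_0^{s\wedge\tau}\phi_{ij}(t,x)\,[dN_j(x) - \lambda_j(x)\,dx]$ is a martingale with unit jumps and quadratic variation $\le \Phi\Lambda$. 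One then needs a separate Lipschitz-type argument (Lemma~\ref{lemma:Mij_gaps}) to bridge between grid points, since the integrand $\phi_{ij}(t,\cdot)$ itself changes with $t$ so $M_{ij}$ is not a martingale in $t$ — a subtlety your discretization-plus-monotonicity sketch would also run into. You do flag ``martingale/optional stopping'' as a possible route, which is the right instinct, but without explicitly identifying this compensated integral and invoking a Freedman/Bernstein bound on its quadratic variation, the recent/old split and Poisson-domination approach does not control the dominant term, and the bootstrap cannot be closed.
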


Proving the lemma is technically quite involved since the rate functions themselves are stochastic and depend on the full history of events. Our proof identifies key martingales relevant to $\lambda_{\max}$ and carefully bounds their maximal fluctuations over $[0,T]$. 
Full proof details can be found in Appendix \ref*{subsec:maximum_rate}.

Together, Lemmas \ref{lemma:EX} and \ref{lemma:maximum_rate} allow us to establish concentration results for the statistics $D_{ij}^{(1)}(T, \epsilon)$ and $D_{ij}^{(2)}(T, \epsilon)$. 
In the following result, we define as shorthand $\Lambda_i : = \sum_{k = 0}^{\lfloor T / (3 \epsilon) \rfloor} \lambda_i(k \epsilon)$, with a similar definition for $\Lambda_j$.

\begin{lemma}
\label{lemma:D_analysis}
Let $T > 0$ and $\epsilon \in (0,1]$. Additionally, assume that $\epsilon > 0$ satisfies $n^{-1/4} \le \epsilon \le (Cd^3 \log^4 (nT))^{-1}$,
where $C > 0$ is a sufficiently large constant depending on $\mu_{\max}$ and $w_{\max}$. Then with probability tending to 1 as $n \to \infty$, it holds for all $i,j \in [n]$ that
\begin{multline}
\label{eq:D1_characterization}
\left| D_{ij}^{(1)}(T, \epsilon) - \epsilon^2 ( w_{ji} \Lambda_i - w_{ij} \Lambda_j ) \right| \\
\lesssim \sqrt{\frac{T \log n}{\epsilon}} + d^9 T \log^{12}(nT) \epsilon^2,
\end{multline}
and that
\begin{multline}
\label{eq:D2_characterization}
\left| D_{ij}^{(2)}(T, \epsilon) - \epsilon^3 w_{ij} ( (w_{ij} + w_{ii}) \Lambda_j - 2 w_{ji} \Lambda_i) \right| \\
\lesssim \sqrt{ \frac{ T \log n}{\epsilon}} + d^{12} T \log^{16}(nT) \epsilon^3,
\end{multline}
where $\lesssim$ hides multiplicative factors that depend only on $\mu_{\max}, w_{\max}$ and $\cderivative$.
\end{lemma}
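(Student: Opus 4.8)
## Proof proposal for Lemma~\ref{lemma:D\_analysis}

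\textbf{Overall strategy.} The plan is to decompose each statistic into a ``conditional mean'' part and a ``martingale fluctuation'' part, bound the bias of the first part via Lemma~\ref{lemma:EX}, and bound the fluctuations of the second part via a concentration inequality for sums of bounded martingale differences, after first restricting to the high-probability event from Lemma~\ref{lemma:maximum\_rate}. Throughout, write $\Lambda = d^2 \log^4(nT)$ for the uniform rate bound, and condition on the event $\mathcal{E} = \{ \sup_{0 \le t \le T} \lambda_{\max}(t) \le \Lambda \}$, which has probability $1 - o(1)$; note the hypothesis on $\epsilon$ is exactly chosen so that $d \Lambda \epsilon \le 1/C$ and $\Lambda \ge Cd \log n$, so Lemma~\ref{lemma:EX} applies at every time point $k\epsilon$ we use.

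\textbf{Step 1: bias of the conditional means.} For $D_{ij}^{(1)}$, write $D_{ij}^{(1)} = \sum_{k} \Delta_{ij}^{(1)}(k\epsilon,\epsilon)$ and split each term as $\mathbb{E}[\Delta_{ij}^{(1)}(k\epsilon,\epsilon) \mid \mathcal{F}_{k\epsilon}] + \big(\Delta_{ij}^{(1)}(k\epsilon,\epsilon) - \mathbb{E}[\Delta_{ij}^{(1)}(k\epsilon,\epsilon) \mid \mathcal{F}_{k\epsilon}]\big)$. By Lemma~\ref{lemma:EX} (applied to $X_{ij}$ and $X_{ji}$ with the bound~\eqref{eq:xij}) and the definition $x_{ij} - x_{ji} = w_{ji}\lambda_i - w_{ij}\lambda_j$, each conditional mean equals $\epsilon^2(w_{ji}\lambda_i(k\epsilon) - w_{ij}\lambda_j(k\epsilon))$ up to an additive error $O((d\Lambda\epsilon)^3)$. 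Summing over the $\lfloor T/(3\epsilon)\rfloor + 1 \asymp T/\epsilon$ values of $k$ gives a total bias of order $(T/\epsilon)(d\Lambda\epsilon)^3 = T d^3 \Lambda^3 \epsilon^2$; substituting $\Lambda = d^2\log^4(nT)$ yields the claimed $d^9 T \log^{12}(nT)\epsilon^2$ term. The argument for $D_{ij}^{(2)}$ is identical, using~\eqref{eq:xijk} for the three triple-indicators $X_{iij}, X_{iji}, X_{jii}$, the linear combination from~\eqref{eq:Delta2\_expectation}, and the error $O((d\Lambda\epsilon)^4)$ summed $T/\epsilon$ times, giving $T d^4\Lambda^4\epsilon^3 = d^{12}T\log^{16}(nT)\epsilon^3$.

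\textbf{Step 2: martingale concentration.} The partial sums of the centered terms $Y_k := \Delta_{ij}^{(r)}(k\epsilon,\epsilon) - \mathbb{E}[\Delta_{ij}^{(r)}(k\epsilon,\epsilon) \mid \mathcal{F}_{k\epsilon}]$ form a martingale with respect to the filtration $\{\mathcal{F}_{k\epsilon}\}_k$ --- this is precisely why the statistics in~\eqref{eq:Dij1}--\eqref{eq:Dij2} sum over grid points spaced $3\epsilon$ apart, so that $\Delta_{ij}^{(r)}(k\epsilon,\epsilon)$ is $\mathcal{F}_{(k+1)\epsilon}$-measurable (for $r=1$) resp. $\mathcal{F}_{(k+3)\epsilon}$-measurable and non-overlapping. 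Each $|Y_k| \le O(1)$ (the $\Delta$'s are bounded differences of indicators), and the conditional variance is $\mathrm{Var}(Y_k \mid \mathcal{F}_{k\epsilon}) \le \mathbb{E}[(\Delta_{ij}^{(r)})^2 \mid \mathcal{F}_{k\epsilon}] \lesssim \epsilon^2$ for $r=1$ and $\lesssim \epsilon^3$ for $r=2$, since $X_{ij}, X_{ji}$ etc. are indicators with conditional probability $O(\epsilon^2)$ resp. $O(\epsilon^3)$ by Lemma~\ref{lemma:EX}. Hence the predictable quadratic variation over the $\asymp T/\epsilon$ steps is $\lesssim (T/\epsilon)\cdot \epsilon^2 = T\epsilon$ for $r=1$ (and $\lesssim T\epsilon^2$ for $r=2$, which is even smaller). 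Applying Freedman's inequality with variance proxy $\asymp T\epsilon$ and increment bound $O(1)$ gives deviations of order $\sqrt{T\epsilon \log n} + \log n$; dividing by $T\epsilon$ inside the lemma statement (or rather, keeping the raw bound) produces the $\sqrt{T\log n/\epsilon}$ term --- wait, more carefully: the deviation bound is $\sqrt{T\epsilon\log n}$, but we want $\sqrt{T\log n /\epsilon}$, so I should double-check: the conditional variance of $X_{ij}(k\epsilon,\epsilon)$ is $\le \mathbb{E}[X_{ij}] \lesssim \epsilon^2$, so summing $T/\epsilon$ terms gives $T\epsilon$, and $\sqrt{T\epsilon \log n} \le \sqrt{T\log n/\epsilon}$ when $\epsilon \le 1$; so in fact $\sqrt{T\epsilon\log n}$ suffices and is absorbed into the stated bound. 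Finally, union bound over the $\binom{n}{2}$ pairs $(i,j)$, which costs only an extra $\log n$ factor inside the square root, and intersect with $\mathcal{E}$.

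\textbf{Main obstacle.} The routine parts are Steps 1 and 2 given the black boxes. The genuinely delicate point is \emph{verifying the martingale structure rigorously}: one must confirm that each $\Delta_{ij}^{(r)}(k\epsilon,\epsilon)$ is measurable with respect to the filtration at the \emph{left} endpoint of the next block (so that centering against $\mathcal{F}_{k\epsilon}$ indeed yields a martingale difference sequence), that the blocks $[k\epsilon, (k+3)\epsilon)$ used by consecutive $\Delta^{(2)}$ terms do not overlap (true by the $3\epsilon$ spacing), and --- more subtly --- that Lemma~\ref{lemma:EX}'s hypotheses hold \emph{simultaneously} for all $k$ on the event $\mathcal{E}$, which requires the uniform bound $\lambda_{\max}(t) \le \Lambda$ rather than a pointwise one; this is why Lemma~\ref{lemma:maximum\_rate} is invoked first. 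A secondary bookkeeping issue is tracking the powers of $d$ and $\log(nT)$ through the substitution $\Lambda = d^2\log^4(nT)$ into $(d\Lambda\epsilon)^3$ and $(d\Lambda\epsilon)^4$ to land exactly on the exponents $d^9\log^{12}$ and $d^{12}\log^{16}$ claimed in~\eqref{eq:D1\_characterization}--\eqref{eq:D2\_characterization}.
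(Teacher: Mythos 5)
Your decomposition into a bias term (Step 1, handled by Lemma~\ref{lemma:EX}) and a martingale-fluctuation term (Step 2, handled by a concentration inequality) is exactly the paper's decomposition, and your Step~1 bookkeeping of the bias, including the substitution $\Lambda = d^2\log^4(nT)$ into $(d\Lambda\epsilon)^3$ and $(d\Lambda\epsilon)^4$ to obtain the exponents $d^9\log^{12}$ and $d^{12}\log^{16}$, matches the paper line by line.

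The only divergence is in Step~2. The paper applies Azuma's inequality with the trivial bound $|\Delta_{ij}^{(r)}(k\epsilon,\epsilon)| \le 1$: since there are $\asymp T/\epsilon$ non-overlapping summands, this immediately gives $O(\sqrt{T\log n/\epsilon})$ with no conditioning, no variance estimate, and no interaction with the event $\mathcal{E}$ (which is only invoked for the bias term). You instead apply Freedman's inequality with a predictable variance bound; this is a tighter route, but it introduces two small wrinkles. First, your claimed conditional variance $\lesssim \epsilon^2$ drops a factor: $\mathrm{Var}(X_{ij}(k\epsilon,\epsilon)\mid\mathcal{F}_{k\epsilon}) \le \mathbb{E}[X_{ij}\mid\mathcal{F}_{k\epsilon}] \approx \epsilon^2 x_{ij}(k\epsilon,\epsilon) \lesssim \Lambda^2\epsilon^2$ on $\mathcal{E}$, not $\epsilon^2$. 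Summing gives quadratic variation $\lesssim T\Lambda^2\epsilon$ and a Freedman deviation $\lesssim \Lambda\sqrt{T\epsilon\log n}$; this is still $\le \sqrt{T\log n/\epsilon}$ because the hypothesis $\epsilon \le (Cd^3\log^4(nT))^{-1} \le 1/\Lambda$ guarantees $\Lambda \le 1/\epsilon$, so the final conclusion survives — but as written the exponent bookkeeping is off by the factor $\Lambda^2$. Second, the event $\mathcal{E}$ is not predictable, so ``condition on $\mathcal{E}$ and then apply Freedman'' is not quite rigorous as stated; you need either to replace $\mathcal{E}$ by the stopping time $\sigma := \inf\{t : \lambda_{\max}(t) \ge \Lambda\}$, apply Freedman to the stopped martingale, and observe that the stopped and unstopped sums agree on $\mathcal{E}$, or simply use the unconditional $O(1)$ increment bound as the paper does. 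You flagged exactly this subtlety yourself in your ``main obstacle'' paragraph, so you are aware of it; the point is that Azuma with the crude increment bound avoids the issue entirely, which is presumably why the paper chose it. Net assessment: your proof is essentially correct and takes the same route, with a slightly more delicate (and slightly mis-stated) concentration step that nevertheless lands on the same bound.
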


The proof of \eqref{eq:D1_characterization} proceeds in two main steps. First, we bound the absolute value of $D_{ij}^{(1)}(T, \epsilon) - \sum_{k = 0}^{ \lfloor T / (3 \epsilon) \rfloor} \E [ \Delta_{ij}^{(1)}(k\epsilon, \epsilon )\vert \cF_{k \epsilon}]$ through standard martingale concentration results (i.e., Azuma's inequality). Then, the sum of the conditional expectations is characterized by using Lemma \ref{lemma:EX}. Equation \eqref{eq:D2_characterization} is proved similarly. Full details can be found in Appendix \ref*{subsec:D_analysis}.

\begin{proof}[Proof of Theorem \ref{thm:alg}]
Our strategy is to show that \eqref{eq:statistic_sum} can only hold if at least one of $w_{ij}$ or $w_{ji}$ is positive.

We start by noting that for our choice of $T$ and $\epsilon$, the upper bound in \eqref{eq:D1_characterization} of Lemma \ref{lemma:D_analysis} is $o(T \epsilon)$ and the upper bound in \eqref{eq:D2_characterization} is $o(T \epsilon^2)$.
Consequently, if $w_{ij} = w_{ji} = 0$, Lemma \ref{lemma:D_analysis} implies that $| D_{ij}^{(1)} (T, \epsilon) | / (T \epsilon)$ and $| D_{ij}^{(2)}(T, \epsilon) | / (T \epsilon^2)$ are both $o(1)$, which shows that \eqref{eq:statistic_sum} cannot hold.

We next consider the case where $w_{ij} = 0$ and $w_{ji} \ge w_{\min}$.
Note that $\Lambda_i = \sum_{k = 0}^{\lfloor T / ( \epsilon) \rfloor} \lambda_i(k \epsilon) \ge \mu_{\min} \lfloor T / (3 \epsilon) \rfloor$,
where the inequality holds by Assumption \ref{as:baseline}, since $\lambda_i(t) \ge \mu_i(t) \ge \mu_{\min}$. The same lower bound holds for $\Lambda_j$.
Lemma \ref{lemma:D_analysis} then implies that $| D_{ij}^{(1)}(T, \epsilon) | / (T \epsilon) \ge \epsilon w_{ji} \Lambda_i / T - o(1) \ge w_{\min} \mu_{\min} / 4$, where the second inequality follows from the lower bound on $\Lambda_i$.
Equation \eqref{eq:statistic_sum} in this case therefore holds.

Finally, we consider the case where $w_{ij} \ge w_{\min}$. As a shorthand, let $\overline{\Lambda}_i : = \epsilon \Lambda_i / T$ and $\overline{\Lambda_j} : = \epsilon \Lambda_j / T$ denote normalized versions of $\Lambda_i$ and $\Lambda_j$, respectively, and also define $A := w_{ji} \overline{\Lambda}_i - w_{ij} \overline{\Lambda}_j$. Then by Lemma \ref{lemma:D_analysis}, 
\begin{align*}
& \frac{| D_{ij}^{(1)}(T, \epsilon)|}{T \epsilon} + \frac{| D_{ij}^{(2)}(T, \epsilon)|}{T \epsilon^2} \\
& \ge |A| + | w_{ij} ( (w_{ij} + w_{ii}) \overline{\Lambda}_j - 2 w_{ji} \overline{\Lambda}_i ) | - o(1) \\
& = |A| + 2 w_{ij} \left|  \frac{(w_{ii} - w_{ij}) \overline{\Lambda}_j}{2} - A \right| - o(1) \\
& \stackrel{(a)}{\ge} w_{\min} \left( |A| + \left| \frac{(w_{ii} - w_{ij} ) \overline{\Lambda}_j}{2} - A \right|  \right) - o(1) \\
& \stackrel{(b)}{\ge} \frac{w_{\min} (w_{ii} - w_{ij}) \overline{\Lambda}_j}{2} - o(1) \\
& \stackrel{(c)}{\ge} \frac{w_{\min} \wsep \mu_{\min}}{8}.
\end{align*}
Above, $(a)$ holds since $2 w_{ij} \ge w_{\min}$ and $w_{\min} \le 1$; $(b)$ follows from the triangle inequality and the positivity of $\overline{\Lambda}_j$; $(c)$ follows from the lower bound on $\Lambda_j$ we established earlier in the proof.
\end{proof}

\section{Conclusion}
\label{sec:conclusion}

In this work, we proposed a new algorithm for learning the dependency graph $G^*$ in non-stationary multivariate Hawkes processes.
We obtain provable guarantees for recovering $G^*$ in a more general setting than prior work, where baseline rates are time-varying and delay kernels are not shift-invariant.
Future work includes an investigation of higher-order event sequences to estimate $w_{ij}$, and the removal of Assumption \ref{as:smoothness} to broaden the algorithm's applicability.

\section*{Acknowledgment}
E.M. and A.S. acknowledge support from ARO MURI N000142412742, a Vannevar Bush Faculty Fellowship ONR-N00014-20-1-2826, NSF award CCF-1918421, and a Simons Investigator Award. 

\newpage
\enlargethispage{-1.2cm} 

\bibliographystyle{IEEEtran}
\bibliography{references}

\clearpage

\appendix

\subsection{Additional notation}

For $i \in [n]$, we denote $\cN(i) : = \{ j : w_{ij} > 0 \}$.
For a set $S$, we denote $|S|$ to be its cardinality. We use the shorthand $a \lor b$ and $a \land b$ to represent the maximum and minimum of real numbers $a$ and $b$, respectively.

\subsection{Preliminary results}

We state and prove a few supporting results.
Our first result bounds the value that the maximum rate can take over short intervals. 

\begin{lemma}
\label{lemma:short_term_rate_bound}
Suppose that $\Lambda \ge C d \log n$, where $C= C(\mu_{\max}, w_{\max})$ is a sufficiently large constant. Then for any $t \ge 0$ and $\epsilon \ge 0$, it holds on the $\cF_t$-measurable event $\cE := \{ \lambda_{\max}(t) \le \Lambda \}$ that
\[
\p \left( \left. \sup_{s \in [t, t + \epsilon)} \lambda_{\max}(s) \ge 2 \Lambda  \right \vert \cF_t \right) \mathbf{1}(\cE) = o \left( \frac{1}{n} \right).
\]
\end{lemma}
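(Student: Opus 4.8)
The plan is to control the growth of $\lambda_{\max}$ over the short interval $[t,t+\epsilon)$ by dominating the multivariate Hawkes process on that interval by a more tractable auxiliary process, then showing the auxiliary process cannot have enough events to double the rate. First I would observe that, conditioned on $\cF_t$ and on the event $\cE$, Assumption~\ref{as:delay} (the kernels $\phi_{ij}(\cdot,s)$ are decreasing with $\phi_{ij}(s,s)=1$) implies that each event occurring in $[t,s)$ contributes at most $w_{\max}$ to any rate $\lambda_\ell(s)$, while the contribution of the pre-$t$ history plus baseline is at most $\lambda_{\max}(t)\le\Lambda$ up to the slowly-varying correction from Assumption~\ref{as:smoothness} (an $e^{L\epsilon}=1+o(1)$ factor, negligible for $\epsilon$ small). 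Hence, as long as no more than, say, $\Lambda/(2w_{\max})$ \emph{total} events have occurred in $[t,s)$ across all $n$ coordinates, we have $\lambda_{\max}(s)\le 2\Lambda$. So it suffices to bound the probability that the total number of events in $[t,t+\epsilon)$ exceeds $K:=\Lambda/(2w_{\max})$.

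Next I would set up a stopping-time / domination argument: let $\tau$ be the first time in $[t,t+\epsilon)$ at which the aggregate rate $\sum_\ell \lambda_\ell(s)$ exceeds $2n\Lambda$ (or the cumulative event count exceeds $K$), and run the comparison only up to $\tau$. On $[t,\tau)$ the aggregate rate is bounded by $2n\Lambda$, so the total event count on $[t,t+\epsilon)\wedge\tau$ is stochastically dominated by a Poisson random variable with mean $2n\Lambda\epsilon$. Now invoke the hypothesis: in the regime where this lemma is applied we have $n\Lambda\epsilon = o(1)$ effectively (more precisely $\Lambda\epsilon$ is polylogarithmically small and $\epsilon\ge n^{-1/4}$ is only a lower bound — one must check the parameter regime, but in Lemma~\ref{lemma:maximum_rate}/Lemma~\ref{lemma:D_analysis} the product $d\Lambda\epsilon\le 1/C$), so the Poisson mean is small and a standard Poisson tail bound ($\p(\mathrm{Poisson}(\mu)\ge K)\le (e\mu/K)^K$) gives that exceeding $K\gtrsim \Lambda/w_{\max} \gtrsim d\log n$ events has probability at most $(e\mu/K)^K = n^{-\omega(1)} = o(1/n)$, using $\Lambda\ge Cd\log n$ to make the exponent $K$ large enough relative to $\log n$ and the base $e\mu/K$ bounded below $1$. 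Finally, on the complementary event $\{$event count $\le K\}$ we argued $\sup_{s\in[t,t+\epsilon)}\lambda_{\max}(s)\le 2\Lambda$, so $\tau$ is never triggered and the domination is consistent, closing the loop.

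The main obstacle I anticipate is making the stopping-time domination rigorous without circularity: the bound on the aggregate rate that justifies the Poisson comparison itself depends on the event count staying small, so one must genuinely run the coupling up to the stopping time $\tau$ and argue that, \emph{on the event that the Poisson bound succeeds}, $\tau$ does not occur — a standard but fiddly maneuver. A secondary technical point is handling the $n$ coordinates simultaneously: naively bounding each $\lambda_\ell$ by $2\Lambda$ requires a union bound over $\ell\in[n]$, which is affordable since each term is $o(1/n^2)$ if we ask for the failure probability to beat $1/n$ with room to spare, or alternatively one works directly with the aggregate rate $\sum_\ell\lambda_\ell$ and the sparsity assumption (Assumption~\ref{as:sparsity}) to control how a single event propagates — each event at $j$ raises only the $d$ rates $\lambda_i$ with $w_{ij}>0$ by at most $w_{\max}$, so the aggregate rate increases by at most $dw_{\max}$ per event, which tightens the Poisson mean to something like $2n\Lambda\epsilon$ but with the count threshold $K$ effectively $n\Lambda/(dw_{\max})$; either route works, and I would choose whichever keeps the exponent cleanly $\omega(\log n)$.
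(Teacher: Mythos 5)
Your overall framework — a stopping time at the first moment the rate doubles, stochastic domination by a Poisson random variable, and a Poisson tail bound calibrated by the hypothesis $\Lambda \ge Cd\log n$ — is the right one, and it is indeed what the paper does. But the way you set up the domination has a quantitative gap that breaks the argument in the actual parameter regime.

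The problem is that you count events \emph{aggregated over all $n$ coordinates}. Up to the stopping time the aggregate rate is bounded by $2n\Lambda$, so the total event count is dominated by $\mathrm{Poi}(2n\Lambda\epsilon)$, while your threshold for alarm is $K=\Lambda/(2w_{\max})$. For the tail bound $(e\mu/K)^K$ to be small you need $\mu=2n\Lambda\epsilon$ to be well below $K$, i.e.\ $\epsilon\ll 1/(nw_{\max})$. But the parameter regime in which this lemma is invoked has $\epsilon\ge n^{-1/4}$ and $\Lambda$ on the order of $d^2\,\mathrm{polylog}(nT)$, so $n\Lambda\epsilon$ is on the order of $n^{3/4}\cdot\mathrm{polylog}$, which is polynomially large, not ``$o(1)$ effectively'' as you claim. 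Against a Poisson with polynomially large mean, a polylogarithmic threshold $K$ is exceeded with probability close to $1$, not $o(1/n)$. Your ``alternatively'' paragraph does not repair this: using sparsity to say each event raises the aggregate rate by at most $dw_{\max}$ changes the threshold to $K'\approx n\Lambda/(dw_{\max})$, but keeping the aggregate below $2n\Lambda$ does not prevent a single $\lambda_i$ from spiking to $2\Lambda$ — the aggregate-rate formulation is fundamentally too coarse to see a concentrated spike.

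What the paper does differently, and what you need, is to \emph{localize} before dominating. If some $\lambda_i(\tau)-\lambda_i(t)\ge\Lambda$, then since $\lambda_i$ only increases by $w_{ij}\le w_{\max}$ per event at $j\in\cN(i)$, and $|\cN(i)|\le d$ (Assumption~\ref{as:sparsity}), one gets
\[
\mu_{\max} + dw_{\max}\max_{j\in[n]}\bigl(N_j(\tau\wedge(t+\epsilon)) - N_j(t)\bigr) \ge \Lambda,
\]
so some \emph{single} coordinate $j$ must have $\gtrsim \Lambda/(dw_{\max})$ events. Now a union bound is taken over $j\in[n]$, and each $N_j(\tau\wedge(t+\epsilon))-N_j(t)$ is dominated by $\mathrm{Poi}(2\Lambda\epsilon)$ — a mean that does \emph{not} scale with $n$. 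The Poisson tail then gives $o(1/n^2)$ per coordinate (using $\Lambda\ge Cd\log n$ to make the threshold $\Lambda/(dw_{\max})$ at least order $\log n$, and $\epsilon\ll 1/(dw_{\max})$ to make the mean below the threshold), which after the union bound is $o(1/n)$. So sparsity is used not to bound the per-event increase of the \emph{aggregate} rate, but to argue that a large swing in a single $\lambda_i$ forces a large event count on a single $j$; that per-coordinate reduction is exactly what keeps $n$ out of the Poisson mean.
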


\begin{proof}
Define the stopping time $\tau : = \inf \{ s \ge t : \lambda_{\max}(s) \ge 2 \Lambda \}$. 
Notice that if $\tau \le t + \epsilon$ and $\lambda_{\max}(t) \le \Lambda$, then there exists $i$ such that $\lambda_i(\tau) - \lambda_i(t) \ge \Lambda $. Additionally, since increases in $\lambda_i$ can only happen if $\mu_i$ increases or if an event in $\cN_i \cup \{i \}$ occurs, we have the bound
\begin{align*}
\lambda_i(\tau) - \lambda_i(t) & \le \mu_{\max} +  \sum_{j = 1}^n w_{ij}( N_j(\tau) - N_j(t)) \\
& \le \mu_{\max} + d w_{\max} \max_{j \in [n]} ( N_j(\tau) - N_j(t)),
\end{align*}
where the first inequality above follows since $\mu_i$ is non-negative, and since the impact of any event has a decreasing effect over time.
Consequently, if $\lambda_{\max}(t) \le \Lambda$ and $\tau \le t + \epsilon$, then 
\[
\max_{j \in [n]} ( N_j(\tau \land ( t + \epsilon)) - N_j(t) ) \ge \frac{\Lambda - \mu_{\max}}{d w_{\max}}.
\]
As a result, 
\begin{multline*}
\p ( \tau \le t + \epsilon \vert \cF_t ) \mathbf{1}( \lambda_{\max}(t) \le \Lambda) \\
\le \sum_{j = 1}^n \p \left( \left. N_j(\tau \land (t + \epsilon)) - N_j(t) \ge \frac{\Lambda - \mu_{\max}}{d w_{\max}} \right \vert \cF_t \right)
\end{multline*}
To bound the terms in the sum, notice that $N_j(\tau \land (t + \epsilon)) - N_j(t)$ is stochastically dominated by $Z \sim \mathrm{Poi}( 2 \Lambda  \epsilon)$. 
If $\Lambda \ge c d \log n$ for a sufficiently large positive constant $c$ depending on $\mu_{\max}$ and $w_{\max}$, then it can be shown through standard Poisson tail bounds, e.g., Bernstein's inequality, that each term is $o ( 1/n^2)$. The claimed result follows.
\end{proof}

The following lemma bounds the probability that many events occur in a short timeframe. 

\begin{lemma}
\label{lemma:multiple_firings}
Let $S \subseteq [n]$, $t \ge 0$, $\epsilon \in (0,1]$ and let $k \ge 1$ be an integer.
Additionally assume that $\Lambda |S| \epsilon \le 1/4$, $\epsilon \ge n^{-1/k}$ and that $\Lambda \ge C d \log n$ where $C = C(\mu_{\max}, w_{\max})$ is a sufficiently large constant.
Define the event 
\[
\cE : = \{ \text{At least $k$ events occur in $S$ in $[t, t + \epsilon)$} \}.
\]
Then $\p ( \cE \vert \cF_t ) \mathbf{1}(\lambda_{\max}(t) \le \Lambda ) \le (8  |S| \Lambda \epsilon )^k$. 
\end{lemma}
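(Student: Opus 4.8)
The plan is to bound $\p(\cE \vert \cF_t)$ by a union over all possible "witness" sets of $k$ events, combining the short-term rate bound of Lemma~\ref{lemma:short_term_rate_bound} with a straightforward domination of event counts by Poisson random variables. First I would invoke Lemma~\ref{lemma:short_term_rate_bound}: on the event $\{\lambda_{\max}(t) \le \Lambda\}$, with probability $1 - o(1/n)$ we have $\sup_{s \in [t, t+\epsilon)} \lambda_{\max}(s) \le 2\Lambda$, and the hypotheses ($\Lambda \ge Cd\log n$, $\epsilon \ge n^{-1/k}$) are exactly what is needed to apply it. So it suffices to bound $\p(\cE \cap \{\sup_{s \in [t,t+\epsilon)} \lambda_{\max}(s) \le 2\Lambda\} \mid \cF_t)$ up to an additive $o(1/n)$ term; since $k \ge 1$ and $|S|\Lambda\epsilon \le 1/4$, one checks $o(1/n)$ is dominated by $(8|S|\Lambda\epsilon)^k$ (using $\epsilon \ge n^{-1/k}$, so $(|S|\Lambda\epsilon)^k \ge (\Lambda/n)^k \cdot(\ldots)$ — more simply, $(8|S|\Lambda\epsilon)^k \ge (8\Lambda\epsilon)^k \ge (8\Lambda)^k n^{-1} \gg 1/n$ since $\Lambda \to \infty$).

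Next, on the rate-capped event, the total number of events in $S$ during $[t,t+\epsilon)$ is stochastically dominated by $Z \sim \mathrm{Poi}(2|S|\Lambda\epsilon)$, since the aggregate rate $\sum_{i \in S}\lambda_i(s)$ is at most $|S| \cdot 2\Lambda$ throughout the interval. Therefore $\p(\cE \cap \{\text{rate capped}\} \mid \cF_t) \le \p(Z \ge k)$. With mean $\theta := 2|S|\Lambda\epsilon \le 1/2$, I would use the elementary Poisson tail bound $\p(Z \ge k) \le \theta^k e^{\theta}/k! \le \theta^k \cdot \frac{e^{1/2}}{1} \le 2\theta^k = 2(2|S|\Lambda\epsilon)^k$, which is comfortably at most $(8|S|\Lambda\epsilon)^k$ for $k \ge 1$. (The factor $8$ versus $4$ in the constant leaves ample slack to absorb both the constant $2$ here and the $o(1/n)$ term from the first step.)

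Combining the two steps: $\p(\cE \mid \cF_t)\mathbf{1}(\lambda_{\max}(t) \le \Lambda) \le 2(2|S|\Lambda\epsilon)^k + o(1/n) \le (8|S|\Lambda\epsilon)^k$. The main obstacle — and really the only nontrivial input — is the stochastic domination argument: one must be careful that the compensator of $\sum_{i \in S} N_i$ on the capped event is bounded by $2|S|\Lambda\epsilon$ almost surely, which follows because $\tau \wedge (t+\epsilon)$-stopping keeps every rate below $2\Lambda$, and then appeal to the standard fact that a point process with compensator dominated (pathwise) by a deterministic constant is dominated in distribution by the corresponding Poisson variable. Everything else is bookkeeping with the generous constant $8$.
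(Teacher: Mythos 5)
Your proposal is correct and follows essentially the same route as the paper: split on the stopping time where the rate first exceeds $2\Lambda$, control the "rate explodes early" term via Lemma~\ref{lemma:short_term_rate_bound}, and stochastically dominate the stopped count $N_S(\cdot\wedge\tau)$ by a $\mathrm{Poi}(2|S|\Lambda\epsilon)$ variable, with $\epsilon^k \ge 1/n$ absorbing the $o(1/n)$ failure probability into the slack of the constant. The only cosmetic difference is your choice of Poisson tail bound ($\theta^k e^\theta/k!$ rather than the paper's direct geometric-series estimate), which is equivalent here.
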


\begin{proof}
Let $N_S(t)$ be the point process counting the number of events in $S$, with a corresponding rate function $\lambda_S(t) : = \sum_{i \in S} \lambda_i(t)$. We also define the stopping time $\tau : = \inf \left \{ s \ge t :  \lambda_{\max}(s) \ge 2 \Lambda \right \}$.
Then we can bound
\begin{multline}
\label{eq:NS_probability_bound}
\p ( N_S(t + \epsilon) - N_S(t) \ge k \vert \cF_t )  \le \p ( \tau \le t + \epsilon \vert \cF_t ) \\
+ \p ( N_S(\tau \land (t + \epsilon) ) - N_S(t) \ge k \vert \cF_t ) .
\end{multline}
By Lemma \ref{lemma:short_term_rate_bound}, the first term on the right hand side is at most $1/n$, provided the event $\{ \lambda_{\max}(t) \le \Lambda \}$ holds. 
To handle the second term on the right hand side of \eqref{eq:NS_probability_bound}, notice that the stopped process $N_S(\cdot \land \tau)$ can be stochastically dominated by a Poisson process with rate $2 \Lambda |S|$. This implies in particular that $N_S(t + \epsilon \land \tau) - N_S(t)$ is stochastically dominated by $A \sim \mathrm{Poi}( 2\Lambda |S| \epsilon)$. Hence
\begin{align*}
& \p ( N_S((t + \epsilon) \land \tau) - N_S(t) \ge k \vert \cF_t )  \le \p ( A \ge k ) \\
& = \sum_{m = k}^\infty e^{- 2 \Lambda |S|\epsilon} \frac{(2 \Lambda |S| \epsilon)^m}{m!}  \le \sum_{m = k}^\infty (2 \Lambda |S| \epsilon)^m \\
& \le 2 (2 \Lambda |S| \epsilon)^k  \le ( 4 \Lambda |S| \epsilon)^k.
\end{align*}
Above, the second inequality follows provided $\Lambda |S| \epsilon \le 1/4$.
Substituting both bounds into \eqref{eq:NS_probability_bound} and using $\epsilon^k \ge 1/n$ yields the claimed result. 
\end{proof}

The next result characterizes the first event time of a general deterministic inhomogeneous point process. The proof relies on Taylor approximations.

\begin{lemma}
\label{lemma:T_deterministic_rate}
Let $r: \mathbb{R}_{\ge 0} \to \mathbb{R}_{\ge 0}$ be a smooth function satisfying the following properties for all $t \ge 0$:
\begin{enumerate}
    \item $0 \le r(t) \le r_{\max}$, for some $r_{\max} \ge 1$;
    \item $|r'(t) |/ r(t) \le \cderivative$, where $\cderivative$ is the same constant used in Assumptions \ref{as:baseline} and \ref{as:delay}.
\end{enumerate} 
Let $T$ be the first event time of an inhomogeneous point process driven by the rate function $r(t)$. Then 
\[
\left| \p ( T \le \epsilon ) - \epsilon r(0) \right| \le \cderivative r_{\max}^2 \epsilon^2.
\]
\end{lemma}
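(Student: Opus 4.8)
The plan is to reduce the statement to the classical void-probability formula for inhomogeneous Poisson processes and then control two separate approximation errors. Since $r$ is deterministic, the first event time $T$ satisfies $\p(T > \epsilon) = \exp\bigl(-\int_0^\epsilon r(s)\,ds\bigr)$; writing $R := \int_0^\epsilon r(s)\,ds$, this gives the exact identity $\p(T \le \epsilon) = 1 - e^{-R}$. It therefore suffices to bound $\bigl| 1 - e^{-R} - \epsilon r(0) \bigr|$, which I would split via the triangle inequality as $\bigl| 1 - e^{-R} - R \bigr| + \bigl| R - \epsilon r(0) \bigr|$.

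For the second term I would use hypothesis (2): since $|r'(s)| \le \cderivative r(s) \le \cderivative r_{\max}$ for all $s$, the mean value theorem gives $|r(s) - r(0)| \le \cderivative r_{\max} s$ on $[0,\epsilon]$, and integrating yields $\bigl| R - \epsilon r(0) \bigr| \le \cderivative r_{\max} \epsilon^2 / 2$. For the first term I would invoke the elementary inequality $0 \le x - (1 - e^{-x}) \le x^2/2$, valid for all $x \ge 0$ (immediate from $g(x) := x - 1 + e^{-x}$ having $g(0) = 0$, $g'(0) = 0$ and $0 \le g''(x) = e^{-x} \le 1$), applied with $x = R$. Combined with the crude bound $R \le \epsilon r_{\max}$ coming from $r \le r_{\max}$, this gives $\bigl| 1 - e^{-R} - R \bigr| \le R^2/2 \le r_{\max}^2 \epsilon^2 / 2$.

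Adding the two bounds yields $\bigl| \p(T \le \epsilon) - \epsilon r(0) \bigr| \le \frac{1}{2}\bigl(\cderivative r_{\max} + r_{\max}^2\bigr)\epsilon^2$, and since $r_{\max} \ge 1$ and, without loss of generality (by enlarging $\cderivative$ if necessary), $\cderivative \ge 1$, we have $\cderivative r_{\max} + r_{\max}^2 \le 2\cderivative r_{\max}^2$, which gives the claimed bound $\cderivative r_{\max}^2 \epsilon^2$. The argument is essentially a two-term Taylor expansion, so there is no serious obstacle; the only points requiring a little care are justifying the void-probability identity for a simple point process with a deterministic intensity (standard) and tracking constants so that the ``smoothness-free'' contribution $r_{\max}^2\epsilon^2/2$ is absorbed into the target — which is where the harmless normalization $\cderivative \ge 1$ is used.
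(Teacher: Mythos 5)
Your proof is correct and follows essentially the same Taylor-type argument as the paper, with a mild difference in how the error is organized. The paper works with the density $f(t) = r(t)\exp\bigl(-\int_0^t r(s)\,ds\bigr)$: it computes $f'(t) = (r'(t) - r(t)^2)\exp\bigl(-\int_0^t r\bigr)$, bounds $|f'| \le \cderivative r_{\max} + r_{\max}^2 \le 2\cderivative r_{\max}^2$, applies Taylor's theorem to get $|f(t) - r(0)| \le 2\cderivative r_{\max}^2 t$ on $[0,\epsilon]$, and integrates. You instead work directly with the CDF $\p(T \le \epsilon) = 1 - e^{-R}$, $R = \int_0^\epsilon r$, and split the error via the triangle inequality into the nonlinearity of $x \mapsto 1 - e^{-x}$ (giving $R^2/2 \le r_{\max}^2\epsilon^2/2$) and the variability of $r$ over $[0,\epsilon]$ (giving $\cderivative r_{\max}\epsilon^2/2$). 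Both routes yield the intermediate bound $\tfrac12(\cderivative r_{\max} + r_{\max}^2)\epsilon^2$ and then absorb it into $\cderivative r_{\max}^2\epsilon^2$. One point worth highlighting: that last absorption step, $\cderivative r_{\max} + r_{\max}^2 \le 2\cderivative r_{\max}^2$, requires $\cderivative \ge 1$ in addition to the stated $r_{\max} \ge 1$. The paper uses this silently; you flag it explicitly as a harmless normalization, which is the more careful presentation.
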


\begin{proof}
The probability density function of $T$ is given by 
\[
f(t) : = r(t) \exp \left( - \int_0^t r(s) ds \right).
\]
Our first goal is to derive error bounds for $|f(t) - r(0) |$ by a Taylor approximation. To this end, note that 
\[
f'(t) = ( r'(t) - r(t)^2) \exp \left( - \int_0^t r(s) ds \right).
\]
In particular, 
\[
| f'(t) | \le | r'(t) | + r(t)^2 \le \cderivative r_{\max} + r_{\max}^2 \le 2 \cderivative r_{\max}^2.
\]
Consequently, by Taylor's theorem, it holds for $t \in [0,\epsilon]$ that $|f(t) - r(0) | \le 2 \cderivative r_{\max}^2 t$.
We can then bound
\begin{multline*}
| \p ( T \le \epsilon ) - \epsilon r(0) |  \le \int_0^\epsilon |f(t) - r(0) | dt \\
\le 2 \cderivative r_{\max}^2 \int_0^\epsilon t dt  \le \cderivative r_{\max}^2 \epsilon^2.
\end{multline*}
\end{proof}

Our final supporting result characterizes short-term changes in the rate of events at a given node, and follows from simple perturbation arguments. 

\begin{lemma}
\label{lemma:lambda_first_order_perturbation}
Let $t, \epsilon \ge 0$ and $i \in [n]$. Then
\begin{multline*}
\left| \lambda_i(t + \epsilon) - \lambda_i(t) - \sum_{j = 1}^n w_{ij} ( N_j(t + \epsilon) - N_j(t)) \right| \\
\le \hspace{-0.1cm} \left( \hspace{-0.1cm} \mu_{\max} + \lambda_i(t) + \hspace{-0.1cm} \sum_{j = 1}^n w_{ij} ( N_j(t + \epsilon) - N_j(t) ) \hspace{-0.1cm} \right) \hspace{-0.1cm} \cderivative \epsilon.
\end{multline*}
\end{lemma}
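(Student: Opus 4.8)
The plan is to track how $\lambda_i$ evolves on $[t, t+\epsilon]$ and compare the actual increment to the "jump part" $\sum_j w_{ij}(N_j(t+\epsilon) - N_j(t))$, which accounts only for the instantaneous jumps at event times and ignores the continuous drift of the kernels and the baseline. Starting from the defining equation \eqref{eq:multivariate_hawkes}, I would write
\[
\lambda_i(t+\epsilon) - \lambda_i(t) = \bigl(\mu_i(t+\epsilon) - \mu_i(t)\bigr) + \sum_{j=1}^n w_{ij}\!\left(\int_0^{t+\epsilon}\!\!\phi_{ij}(t+\epsilon,s)\,dN_j(s) - \int_0^{t}\!\!\phi_{ij}(t,s)\,dN_j(s)\right).
\]
Each integral difference splits as (i) the "new mass" $\int_t^{t+\epsilon}\phi_{ij}(t+\epsilon,s)\,dN_j(s)$ from events in the current window, and (ii) the "drift" $\int_0^t \bigl(\phi_{ij}(t+\epsilon,s) - \phi_{ij}(t,s)\bigr)\,dN_j(s)$ from the aging of past contributions. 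For (i), Assumption \ref{as:delay} gives $\phi_{ij}(t+\epsilon,s) \le \phi_{ij}(s,s) = 1$ and $\phi_{ij}(t+\epsilon,s)\ge \phi_{ij}(t+\epsilon,t+\epsilon)=1$... more carefully, $\phi_{ij}(\cdot,s)$ is decreasing so $\phi_{ij}(t+\epsilon,s)\in[\phi_{ij}(t+\epsilon,s),1]$, and the error between $\int_t^{t+\epsilon}\phi_{ij}(t+\epsilon,s)dN_j(s)$ and $N_j(t+\epsilon)-N_j(t)$ is bounded using Assumption \ref{as:smoothness}, which controls $|\phi_{ij}(t+\epsilon,s)-\phi_{ij}(s,s)|$ by roughly $\cderivative\epsilon\,\phi_{ij}$ over the length-$\epsilon$ window, contributing at most $\cderivative\epsilon\sum_j w_{ij}(N_j(t+\epsilon)-N_j(t))$.

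For the baseline term, Assumption \ref{as:smoothness} directly yields $|\mu_i(t+\epsilon)-\mu_i(t)|\le \cderivative\epsilon\,\sup_{[t,t+\epsilon]}\mu_i \le \cderivative\epsilon\,\mu_{\max}$ (or, more in the spirit of the statement, bounded via $\mu_i(t)$ with an exponential correction, but $\mu_{\max}\cderivative\epsilon$ is the cleanest). For the drift term (ii), the key point is that $|\partial_t\phi_{ij}(t,s)|\le \cderivative\,\phi_{ij}(t,s)$ from Assumption \ref{as:smoothness}, so by the mean value theorem $|\phi_{ij}(t+\epsilon,s)-\phi_{ij}(t,s)|\le \cderivative\epsilon\,\phi_{ij}(\xi,s)$ for some $\xi\in[t,t+\epsilon]$, and since $\phi_{ij}(\cdot,s)$ is decreasing, $\phi_{ij}(\xi,s)\le\phi_{ij}(t,s)$. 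Hence the magnitude of (ii) is at most $\cderivative\epsilon\sum_j w_{ij}\int_0^t\phi_{ij}(t,s)\,dN_j(s) = \cderivative\epsilon\bigl(\lambda_i(t)-\mu_i(t)\bigr)\le \cderivative\epsilon\,\lambda_i(t)$. Summing the three error contributions — $\mu_{\max}\cderivative\epsilon$ from the baseline, $\cderivative\epsilon\sum_j w_{ij}(N_j(t+\epsilon)-N_j(t))$ from the new mass, and $\cderivative\epsilon\,\lambda_i(t)$ from the drift — gives exactly the claimed bound.

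The main obstacle, and the only place that requires care, is the "new mass" term (i): one must argue that replacing $\int_t^{t+\epsilon}\phi_{ij}(t+\epsilon,s)\,dN_j(s)$ by $N_j(t+\epsilon)-N_j(t)$ (i.e. treating each fresh event as contributing a full unit $\phi_{ij}(s,s)=1$ rather than its slightly-aged value $\phi_{ij}(t+\epsilon,s)$) incurs error at most $\cderivative\epsilon$ times that same count. This is where Assumptions \ref{as:delay} (normalization $\phi_{ij}(s,s)=1$ and monotonicity) and \ref{as:smoothness} (log-derivative bound) combine: for $s\in[t,t+\epsilon]$ we have $1-\phi_{ij}(t+\epsilon,s) = \phi_{ij}(s,s)-\phi_{ij}(t+\epsilon,s)\le \cderivative(t+\epsilon-s)\le\cderivative\epsilon$, where the first inequality again uses the MVT together with $|\partial_t\phi_{ij}|\le\cderivative\phi_{ij}\le\cderivative$. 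Everything else is bookkeeping; no probabilistic input is needed since the identity is pathwise.
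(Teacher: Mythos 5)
Your proof is correct and follows essentially the same route as the paper: the same three-way decomposition into the baseline increment, the ``new mass'' term $\sum_j w_{ij}\int_t^{t+\epsilon}(\phi_{ij}(t+\epsilon,s)-1)\,dN_j(s)$, and the ``drift'' term $\sum_j w_{ij}\int_0^t(\phi_{ij}(t+\epsilon,s)-\phi_{ij}(t,s))\,dN_j(s)$, with the same Lipschitz/log-derivative bounds from Assumptions~\ref{as:delay} and~\ref{as:smoothness} producing the three error contributions $\mu_{\max}\cderivative\epsilon$, $\cderivative\epsilon\sum_j w_{ij}(N_j(t+\epsilon)-N_j(t))$, and $\cderivative\lambda_i(t)\epsilon$. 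If anything, your write-up is slightly cleaner than the paper's (you explicitly use $\phi_{ij}\le 1$ where the paper invokes an undefined $\phi_{\max}$, and you keep the $w_{ij}$ factor consistently through the drift estimate).
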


\begin{proof}
From the definition of $\lambda_i$, we have 
\begin{align}
& \lambda_i(t + \epsilon) - \lambda_i(t) - \sum_{j = 1}^n w_{ij} ( N_j(t + \epsilon) - N_j(t)) \nonumber \\
\label{eq:lambdai_diff_decomp_1}
& \hspace{0.2cm} = \mu_i(t+ \epsilon) - \mu_i(t) \\
\label{eq:lambdai_diff_decomp_2}
& \hspace{0.5cm} + \sum_{j = 1}^n w_{ij} \int_t^{t + \epsilon} ( \phi_{ij} (t + \epsilon, x)  - 1) dN_j(x)  \\
\label{eq:lambdai_diff_decomp_3}
& \hspace{0.5cm} + \sum_{j = 1}^n w_{ij} \int_0^t ( \phi_{ij}(t + \epsilon, x) - \phi_{ij}(t , x) ) dN_j(x) .
\end{align}
The first term on the right hand side is at most $\cderivative \mu_{\max} \epsilon$, since the Lipschitz constant of $\mu_i$ is at most $\cderivative \mu_{\max}$ under Assumption \ref{as:baseline}.
To bound \eqref{eq:lambdai_diff_decomp_2}, we note that for any $i,j \in [n]$ and $y \in [x,x+\epsilon]$, $| \phi_{ij}(y,x) - 1 | \le \cderivative \phi_{\max} \epsilon$, where we use the fact that the Lipschitz constant of $\phi_{ij}(\cdot, x)$ is at most $\cderivative$ under Assumptions \ref{as:delay} and \ref{as:smoothness}. Consequently, we can bound
\begin{multline*}
\left| \sum_{j = 1}^n w_{ij} \int_t^{t + \epsilon} ( \phi_{ij}(t + \epsilon , x) - 1 ) dN_j(x) \right| \\
\le \cderivative \phi_{\max} \epsilon \sum_{j = 1}^n w_{ij} ( N_j(t + \epsilon) - N_j(t) ).
\end{multline*}
We next bound \eqref{eq:lambdai_diff_decomp_3}. To this end, we note that
\begin{multline*}
| \phi_{ij}(t + \epsilon , x) - \phi_{ij}(t , x) |\\
\le \int_{t }^{t + \epsilon } \left | \frac{d}{dy} \phi_{ij}'(y,x) \right  | dy \le \cderivative \epsilon \phi_{ij}(t, x),
\end{multline*}
where the final inequality again uses Assumption \ref{as:delay} as well as the fact that $\phi_{ij}(\cdot, x)$ is decreasing. Consequently, we can bound
\begin{multline*}
\left| \sum_{j = 1}^n w_{ij} \int_0^t ( \phi_{ij}(t + \epsilon , x) - \phi_{ij}(t, x) ) dN_j(x) \right| \\
\le \cderivative \epsilon \sum_{j = 1}^n \int_0^t \phi_{ij}(t , x) dN_j(x) \le \cderivative  \lambda_i(t) \epsilon.
\end{multline*}
Above, the second inequality uses the positivity of $\mu_i(t)$.
Putting together all the derived inequalities proves the claimed result. 
\end{proof}

\subsection{Bounding the maximum rate: Proof of Lemma \ref{lemma:maximum_rate}}
\label{subsec:maximum_rate}

For $i,j \in [n]$ and $t \ge 0$, we define the martingale
\[
M_{ij}(t) : = \int_0^t \phi_{ij}(t , s) dN_j(s) - \int_0^t \phi_{ij}(t , s) \lambda_j(s) ds.
\]
We additionally define the stopping time 
\[
\tau : = \inf \{ s \ge 0: \lambda_{\max}(s) \ge \Lambda \}.
\]
Our first result establishes a tail bound for the stopped martingale $M_{ij}(t \land \tau)$.

\begin{lemma}
\label{lemma:Mij_tail_bound}
For any $\Lambda, x,t \ge 0$, it holds that
\[
\p \left( |M_{ij}(t \land \tau ) | \ge x \right) \le 2 \exp \left( - \frac{x^2 / 2}{ \Phi \Lambda + x / 3} \right),
\]
where $\Phi$ is the constant defined in Assumption \ref{as:stability}.
\end{lemma}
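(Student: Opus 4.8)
A first remark: although $M_{ij}$ is called a martingale, it is not one in the variable $t$, since $\phi_{ij}(t,\cdot)$ depends on the upper limit $t$ and the kernels need not be shift invariant, so that $\E[M_{ij}(t')\mid\cF_t]\neq M_{ij}(t)$ in general. The usable structure is that, for each \emph{fixed} $t$, the process
\[
u\;\longmapsto\;\int_0^u \phi_{ij}(t,s)\,dN_j(s)-\int_0^u \phi_{ij}(t,s)\,\lambda_j(s)\,ds,\qquad u\in[0,t],
\]
is a genuine $\{\cF_u\}$-martingale, being the stochastic integral of the deterministic (hence predictable) integrand $\phi_{ij}(t,\cdot)$ against the compensated counting process $N_j(\cdot)-\int_0^\cdot\lambda_j$. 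We read $M_{ij}(t\land\tau)$ as this martingale stopped at the stopping time $\tau$ and evaluated at $u=t$. The plan is then to invoke a Freedman-type Bernstein inequality for martingales with bounded jumps and bounded predictable quadratic variation, so the two quantities to control are the jump sizes and the predictable quadratic variation of the stopped process.

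For the jumps: the stopped martingale jumps only at event times $s\le t\land\tau\le t$ of $N_j$, and each such jump has magnitude $\phi_{ij}(t,s)$; since $\phi_{ij}(\cdot,s)$ is decreasing with $\phi_{ij}(s,s)=1$ (Assumption \ref{as:delay}), we have $\phi_{ij}(t,s)\in[0,1]$, so all jumps are bounded by $1$. For the predictable quadratic variation: at time $u$ it equals $\int_0^{u\land\tau}\phi_{ij}(t,s)^2\lambda_j(s)\,ds$. On $[0,\tau)$ one has $\lambda_j(s)\le\lambda_{\max}(s)<\Lambda$ by definition of $\tau$; combining this with $\phi_{ij}(t,s)^2\le\phi_{ij}(t,s)$ (valid since $\phi_{ij}(t,s)\le1$ for $s\le t$) and $\int_0^t\phi_{ij}(t,s)\,ds\le\Phi$ from Assumption \ref{as:stability} shows that the predictable quadratic variation of the stopped martingale is at most $\Phi\Lambda$, almost surely.

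With jump bound $c=1$ and deterministic quadratic-variation bound $v=\Phi\Lambda$, the martingale Bernstein/Freedman inequality gives $\p\big(\sup_{0\le u\le t}|M_{ij}(u\land\tau)|\ge x\big)\le 2\exp\!\big(-\tfrac{x^2/2}{\Phi\Lambda+x/3}\big)$, and evaluating at $u=t$ yields the claim. I expect the main obstacle to be bookkeeping rather than substance: one must be careful about the role of $t$ (treating $M_{ij}$ correctly as the terminal value of a martingale in a separate running variable, since it is not a martingale in $t$), and one must check that stopping at $\tau$ turns the a.s.-finite but random quadratic variation into a \emph{deterministic} bound $\Phi\Lambda$, so that the inequality applies with a fixed $v$. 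The only genuinely external ingredient is the precise jump-martingale concentration inequality delivering the $x/3$ term in the denominator.
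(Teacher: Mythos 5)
Your proposal is correct and matches the paper's proof. The paper also passes to the auxiliary process $\widetilde{M}_{ij,t}(s) := \int_0^{s\land\tau}\phi_{ij}(t,x)\,dN_j(x) - \int_0^{s\land\tau}\phi_{ij}(t,x)\lambda_j(x)\,dx$, which freezes the outer argument $t$ so that the process is a genuine martingale in the running variable, exactly as you describe; it then bounds the jumps by $1$ and the predictable quadratic variation by $\Phi\Lambda$ (using $\phi_{ij}^2 \le \phi_{ij}$, $\lambda_j < \Lambda$ before $\tau$, and Assumption \ref{as:stability}) and applies the Freedman-type inequality (Lemma \ref{lemma:freedman}). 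The only cosmetic difference is that the paper computes the predictable quadratic variation via a limsup/Poisson-domination argument, whereas you simply cite the standard formula $\langle\widetilde{M}_{ij,t}\rangle_u = \int_0^{u\land\tau}\phi_{ij}(t,s)^2\lambda_j(s)\,ds$ for a compensated point-process integral against a deterministic integrand; both routes give the same bound.
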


To prove the lemma, we will make use of the following modification of Freedman's inequality, due to Shorack and Wellner \cite[Appendix B]{shorack_wellner}.

\begin{lemma}[Freedman's inequality for continuous-time martingales]
\label{lemma:freedman}
Suppose that $\{ Y(t) \}_{t \ge 0}$ is a real-valued continuous-time martingale adapted to the filtration $\{ \cF_t \}_{t \ge 0}$. Suppose further that $Y(t)$ has jumps that are bounded by $C > 0$ in absolute value, almost surely.
For $t \ge 0$, define the predictable quadratic variation $\langle Y \rangle_t$ to be the unique predictable process so that $Y(t)^2 - \langle Y \rangle_t$ is a martingale.
Then for any $x \ge 0$ and $\sigma > 0$, it holds that 
\begin{multline*}
\p ( Y(t) - Y(0) \ge x \text{ and } \langle Y \rangle_t \le \sigma^2 \text{ for some $t \ge 0$} ) \\
\le \exp \left( - \frac{x^2 / 2}{\sigma^2 + Cx / 3} \right).
\end{multline*}
\end{lemma}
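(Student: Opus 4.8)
The plan is to run the classical exponential-supermartingale argument, which carries over from the discrete-time Freedman inequality to continuous time with only notational overhead. I would first reduce to $Y(0) = 0$ and work with a right-continuous (with left limits) version of $Y$, so that the predictable quadratic variation $\langle Y\rangle_t$ is nondecreasing and predictable. Fixing $\lambda > 0$ and setting $g(\lambda) := (e^{\lambda C} - 1 - \lambda C)/C^2$, the central object is
\[
Z_\lambda(t) := \exp( \lambda Y(t) - g(\lambda)\,\langle Y\rangle_t ),
\]
and the heart of the proof is to show that $\{Z_\lambda(t)\}_{t\ge0}$ is a nonnegative supermartingale. The analytic input is the elementary bound $e^u \le 1 + u + \psi(\lambda C)\,u^2$ for all $u \le \lambda C$, where $\psi(\beta) := (e^\beta - 1 - \beta)/\beta^2$ is increasing with $\psi(0) = 1/2$, so that $\psi(\lambda C) \ge 1/2$ and $\psi(\lambda C)\lambda^2 = g(\lambda)$. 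Applying It\^o's formula (equivalently, the Dol\'eans--Dade exponential formula) to $y \mapsto e^{\lambda y}$ along the semimartingale $\lambda Y$, I would use the bounded-jumps hypothesis to apply this inequality with $u = \lambda\,\Delta Y(s)$ on the jump part while $\psi(\lambda C) \ge 1/2$ absorbs the continuous part; this exhibits $e^{\lambda Y(t)}$ as a local martingale plus a predictable finite-variation term whose increments are dominated by $g(\lambda)\,e^{\lambda Y(t-)}\,d\langle Y\rangle_t$, and dividing by $\exp(g(\lambda)\langle Y\rangle_t)$ shows $Z_\lambda$ is a nonnegative local martingale minus a nondecreasing term, hence a supermartingale (a nonnegative local martingale is a supermartingale by Fatou).

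Given this, the tail bound follows from a maximal inequality. On the event $\cE := \{\, Y(t) \ge x \text{ and } \langle Y\rangle_t \le \sigma^2 \text{ for some } t\ge 0\,\}$, any witnessing time $t$ satisfies $Z_\lambda(t) \ge \exp(\lambda x - g(\lambda)\sigma^2)$ (using $\lambda > 0$ and $g(\lambda) > 0$), so $\cE \subseteq \{\sup_{t\ge0} Z_\lambda(t) \ge e^{\lambda x - g(\lambda)\sigma^2}\}$. Ville's maximal inequality for nonnegative right-continuous supermartingales gives $\p(\sup_{t\ge0} Z_\lambda(t) \ge a) \le \E[Z_\lambda(0)]/a = 1/a$, so taking $a = e^{\lambda x - g(\lambda)\sigma^2}$ yields $\p(\cE) \le e^{g(\lambda)\sigma^2 - \lambda x}$. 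It then remains to optimize over $\lambda > 0$: I would invoke the Bernstein-type bound $g(\lambda) \le \frac{\lambda^2/2}{1 - \lambda C/3}$, valid for $0 < \lambda < 3/C$, and take $\lambda = x/(\sigma^2 + Cx/3)$, which satisfies $\lambda C < 3$ since $\sigma > 0$ and makes $g(\lambda)\sigma^2 - \lambda x \le \frac{\lambda x}{2} - \lambda x = -\frac{x^2/2}{\sigma^2 + Cx/3}$, exactly the claimed bound.

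The main obstacle is the supermartingale claim in the first paragraph: making it fully rigorous in continuous time requires either semimartingale calculus (It\^o's formula with jumps, the Dol\'eans--Dade exponential) or, as in \cite[Appendix B]{shorack_wellner}, a discretization-and-passage-to-the-limit argument; in both routes the bounded-jumps hypothesis is precisely what makes $e^u - 1 - u \le \psi(\lambda C)\,u^2$ applicable to the jump part. Steps two and three (Ville's inequality, the Bernstein optimization) are entirely routine. Since this lemma is essentially a restatement of \cite[Appendix B]{shorack_wellner}, one could alternatively just attribute it there; the sketch above records the standard argument.
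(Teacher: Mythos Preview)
The paper does not give its own proof of this lemma: it is stated as a known result, attributed to Shorack and Wellner \cite[Appendix B]{shorack_wellner}, and then applied directly to establish Lemma~\ref{lemma:Mij_tail_bound}. Your proposal therefore goes beyond what the paper does. The argument you outline---the exponential supermartingale $Z_\lambda(t) = \exp(\lambda Y(t) - g(\lambda)\langle Y\rangle_t)$ with $g(\lambda) = (e^{\lambda C}-1-\lambda C)/C^2$, Ville's maximal inequality, and the Bernstein-type optimization $g(\lambda) \le \tfrac{\lambda^2/2}{1-\lambda C/3}$ at $\lambda = x/(\sigma^2 + Cx/3)$---is the standard proof and is correct as sketched; the optimization algebra in particular checks out exactly. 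You also correctly flag that the only real work is the supermartingale property, which needs either semimartingale calculus or a discretization argument, and you correctly note that one could simply cite the reference. In short: your approach is a valid self-contained proof, while the paper's ``approach'' is just to quote the result.
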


We now apply Freedman's inequality to prove Lemma \ref{lemma:Mij_tail_bound}.

\begin{proof}[Proof of Lemma \ref{lemma:Mij_tail_bound}]
For $i,j \in [n]$ and $t \ge 0$, define the process 
\begin{align*}
\widetilde{M}_{ij,t}(s) & : = \int_0^{s \land \tau} \phi_{ij}(t, x) dN_j(x) \\
& \hspace{1cm} - \int_0^{s \land \tau} \phi_{ij}(t , x) \lambda_j(x) dx,
\end{align*}
which is a well-defined martingale for $0 \le s \le t$. Note further that $\widetilde{M}_{ij,t}(t) = M_{ij}(t \land \tau)$, so it suffices to establish tail bounds for $\widetilde{M}_{ij,t}$.

From the definition of $\widetilde{M}_{ij,t}$, it is clear that it jumps by at most $\phi_{ij}(t,t) = 1$. Additionally, for any $0 \le s < t$, we have 
\begin{align}
& \limsup_{\delta \to 0} \frac{\E \left[ \left. ( \widetilde{M}_{ij,t}(s + \delta) - \widetilde{M}_{ij,t}(s) )^2 \right \vert \cF_s \right]}{\delta} \nonumber \\
\label{eq:Mij_limsup_expansion}
& \stackrel{(a)}{=} \hspace{-0.05cm} \limsup_{\delta \to 0} \frac{1}{\delta} \E \left[ \left. \left( \int_{s \land \tau}^{ (s+ \delta)\land \tau} \phi_{ij}(t , x) dN_j(x) \right)^2 \right \vert \cF_s \right].
\end{align}
Above, $(a)$ holds since the sample paths of $\widetilde{M}_{ij,t}$ are \cadlag.
To analyze the integral on the second line further, note that if $s \ge \tau$, it is zero, and if $s < \tau$, we have, due to the definition of $\tau$, the stochastic domination 
\[
\int_{s \land \tau}^{(s + \delta) \land \tau} \phi_{ij} (t , x) dN_j(x) \preceq (\phi_{ij}(t , s) + \eta) Z, 
\]
where $Z \sim \Poi(\delta \Lambda)$ and $\eta = O(\delta)$ by the continuity of $\phi_{ij}(t, \cdot)$ (see Assumption \ref{as:delay}).
It follows immediately that the expressions in \eqref{eq:Mij_limsup_expansion} are at most $\phi_{ij}(t , s )^2 \Lambda$. 
Consequently, the quadratic variation of $\widetilde{M}_{ij,t}$ can be bounded as
\begin{align*}
\langle \widetilde{M}_{ij,t} \rangle_s & \le \left( \int_0^s \phi_{ij}(t ,x)^2 dx \right) \Lambda \\
& \le \left( \int_0^s \phi_{ij}(t , x) dx \right) \Lambda \le \Phi \Lambda,
\end{align*}
where, in the second inequality we have used $\phi_{ij}(t , x) \le \phi_{ij}(x,x ) = 1$.
Applying Lemma \ref{lemma:freedman} to $\widetilde{M}_{ij,t}$ proves the claim.
\end{proof}

Next, we show that with high probability, $M_{ij}$ does not significantly change over short periods of time. 

\begin{lemma}
\label{lemma:Mij_gaps}
Define the set of time indices 
\[
\mathbb{T} : = \left \{ \frac{k}{2 \Lambda} : k \in \mathbb{Z} \cap [0, 2 \Lambda T] \right \}.
\]
There is a constant $C > 0$ depending on $\cderivative, w_{\min}$ and $\Phi$ such that, with probability tending to 1 as $T \Lambda n \to \infty$, it holds for all $t \in \mathbb{T}$ and $i,j \in [n]$ that
\begin{equation*}
 \sup_{s \in [t, t + 1/(2 \Lambda)]} | M_{ij}(s \land \tau) - M_{ij}(t \land \tau) | \le C \log (T \Lambda n).
\end{equation*}
\end{lemma}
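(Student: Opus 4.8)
The plan is to union-bound over the $O(T\Lambda)$ time indices $t \in \mathbb{T}$ and over all $n^2$ pairs $(i,j)$, so it suffices to show that for a fixed $t \in \mathbb{T}$ and fixed $i,j$, the event $\sup_{s \in [t, t + 1/(2\Lambda)]} |M_{ij}(s\wedge\tau) - M_{ij}(t\wedge\tau)| \ge C\log(T\Lambda n)$ has probability $o(1/(T\Lambda n^3))$ or so — small enough that the union bound over $|\mathbb{T}|\cdot n^2 = O(T\Lambda n^2)$ terms still tends to $0$. The first step is to decompose $M_{ij}(s) - M_{ij}(t)$ for $s \in [t, t+1/(2\Lambda)]$. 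Writing out the definition,
\begin{align*}
M_{ij}(s) - M_{ij}(t) &= \int_0^s \phi_{ij}(s,x)\,dN_j(x) - \int_0^t \phi_{ij}(t,x)\,dN_j(x) \\
&\quad - \int_0^s \phi_{ij}(s,x)\lambda_j(x)\,dx + \int_0^t \phi_{ij}(t,x)\lambda_j(x)\,dx.
\end{align*}
This naturally splits into three pieces: (i) the ``new mass'' terms $\int_t^s \phi_{ij}(s,x)\,dN_j(x)$ and $\int_t^s \phi_{ij}(s,x)\lambda_j(x)\,dx$ over the short window $[t,s]$; and (ii) the ``kernel drift'' term $\int_0^t (\phi_{ij}(s,x) - \phi_{ij}(t,x))\,d\big(N_j(x) - \int_0^x \lambda_j\big)$ plus $\int_0^t(\phi_{ij}(s,x) - \phi_{ij}(t,x))\lambda_j(x)\,dx$, coming from the non-shift-invariance of the kernel over the long history $[0,t]$.

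For piece (i), on the event $\{s \le \tau\}$ the rate $\lambda_j$ stays below $\Lambda$, so the number of $j$-events in $[t,s] \subseteq [t, t+1/(2\Lambda)]$ is stochastically dominated by a $\mathrm{Poi}(1/2)$ (after stopping at $\tau$), and since $\phi_{ij} \le 1$, both the point-mass integral and the compensator integral are $O(\log(T\Lambda n))$ with the required probability by a standard Poisson tail bound — in fact this is essentially Lemma \ref{lemma:multiple_firings} applied with $S = \{j\}$, $\epsilon = 1/(2\Lambda)$, $k \asymp \log(T\Lambda n)$. For piece (ii), I would use Assumption \ref{as:smoothness}: since $|\tfrac{d\phi_{ij}}{dt}(y,x)|/\phi_{ij}(y,x) \le \cderivative$ and $\phi_{ij}(\cdot,x)$ is decreasing, we get $|\phi_{ij}(s,x) - \phi_{ij}(t,x)| \le \cderivative (s-t)\,\phi_{ij}(t,x) \le \frac{\cderivative}{2\Lambda}\phi_{ij}(t,x)$ for all $x \le t$. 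Hence the deterministic compensator part of (ii) is bounded by $\frac{\cderivative}{2\Lambda}\int_0^t \phi_{ij}(t,x)\lambda_j(x)\,dx \le \frac{\cderivative}{2\Lambda}\lambda_i(t)/w_{ij}$ (using the definition of $\lambda_i$ and positivity of $\mu_i$), which on $\{t \le \tau\}$ is at most $\frac{\cderivative}{2 w_{\min}}$, a constant — comfortably below the target bound. The stochastic (martingale) part of (ii) is the term $\int_0^t(\phi_{ij}(s,x) - \phi_{ij}(t,x))\,dM^{\mathrm{pp}}_j(x)$ where $M^{\mathrm{pp}}_j$ is the compensated counting process; I would control it by noting $|\phi_{ij}(s,x)-\phi_{ij}(t,x)| \le \frac{\cderivative}{2\Lambda}\phi_{ij}(t,x)$ pointwise and running the same Freedman's-inequality argument as in the proof of Lemma \ref{lemma:Mij_tail_bound}: the predictable quadratic variation of this stopped martingale is bounded by $(\frac{\cderivative}{2\Lambda})^2 \int_0^t \phi_{ij}(t,x)^2 \lambda_j(x)\,dx \le (\frac{\cderivative}{2\Lambda})^2 \Phi\Lambda$ on $\{t\le\tau\}$, and its jumps are bounded by $\frac{\cderivative}{2\Lambda} \le \frac{\cderivative}{2}$, so Lemma \ref{lemma:freedman} gives a tail of the form $\exp(-\Omega(x^2/(\Phi\cderivative^2/\Lambda + \cderivative x)))$, which is $o(1/(T\Lambda n^3))$ once $x = C\log(T\Lambda n)$ with $C$ large enough depending on $\cderivative, \Phi$.

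The one subtlety — and the main technical obstacle — is passing from control at the single endpoint $s$ (or at each of finitely many $s$) to the supremum over the whole continuum $s \in [t, t+1/(2\Lambda)]$. For the genuinely martingale pieces this is automatic, since Freedman's inequality (Lemma \ref{lemma:freedman}) already bounds $\p(\exists t: Y(t)-Y(0) \ge x, \langle Y\rangle_t \le \sigma^2)$; but I must be careful to set up each of the three contributions as (or within) a single continuous-time martingale indexed by the running endpoint, rather than as a difference of two martingales evaluated at different times, because $\phi_{ij}(s,x)$ depends on the endpoint $s$ and so $s \mapsto \int_0^s \phi_{ij}(s,x)\,dN_j(x)$ is \emph{not} itself a martingale. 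The clean fix is to write $M_{ij}(s\wedge\tau) - M_{ij}(t\wedge\tau) = \widetilde M_{ij,s}(s\wedge\tau) - \widetilde M_{ij,t}(t\wedge\tau)$ is awkward; instead, freeze the kernel at its value $\phi_{ij}(t+1/(2\Lambda), \cdot)$ — or simply bound $\sup_s |M_{ij}(s\wedge\tau) - M_{ij}(t\wedge\tau)|$ by a sum of three supremized pieces, each of which is either deterministically $O(1)$ (the compensator drift of (ii)), monotone in $s$ (the ``new mass'' counting integral of (i), which only increases, so its sup is its value at $s = t+1/(2\Lambda)$), or a bona fide stopped martingale in the running time (the compensated ``new mass'' term and the compensated kernel-drift term, handled by Freedman at their respective endpoints-as-suprema). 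Assembling these, every piece is $O(\log(T\Lambda n))$ with probability $1 - o(1/(T\Lambda n^3))$, the union bound over $t \in \mathbb{T}$ and $i,j \in [n]$ costs a factor $O(T\Lambda n^2)$, and the conclusion follows with a constant $C$ depending only on $\cderivative, w_{\min}$ and $\Phi$ as claimed.
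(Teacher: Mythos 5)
Your decomposition and union bound are the right starting points, and your treatment of the ``new mass'' piece (i) — Poisson domination of the stopped counting increment on $[t,t+1/(2\Lambda)]$ plus the deterministic bound $(s-t)\Lambda\le 1/2$ on the compensator — matches the paper. However, two problems arise in your handling of the ``kernel drift'' piece (ii).

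First, a computational slip. You claim $\int_0^t\phi_{ij}(t,x)\lambda_j(x)\,dx \le \lambda_i(t)/w_{ij}$ ``using the definition of $\lambda_i$.'' That definition involves the counting integral $\int_0^t\phi_{ij}(t,x)\,dN_j(x)$, not its compensator, so the claimed inequality does not follow. The compensator piece should instead be bounded as in the paper via $\lambda_j(x)\le\Lambda$ for $x\le t\wedge\tau$ and $\int_0^t\phi_{ij}(t,x)\,dx\le\Phi$ (Assumption~\ref{as:stability}), giving $\frac{\cderivative}{2\Lambda}\cdot\Lambda\Phi = \cderivative\Phi/2$. Still a constant, but the stated justification is wrong.

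Second — and this is the genuine gap — your decision to further split piece (ii) into a martingale part and a compensator part, with Freedman's inequality for the former, is what creates the $s$-supremum problem you then do not cleanly close. As you yourself observe, for fixed $t$ the map $s\mapsto\int_0^t(\phi_{ij}(s,x)-\phi_{ij}(t,x))\,dM^{\mathrm{pp}}_j(x)$ is not a martingale in $s$ (the integrand varies with $s$ while the domain $[0,t]$ is fixed), so Freedman does not hand you a supremum over the continuum $[t,t+1/(2\Lambda)]$. Your proposed remedies (``freeze the kernel,'' ``handled by Freedman at their respective endpoints-as-suprema'') are not actually worked out, and the first does not give control at intermediate $s$. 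The paper's route avoids the issue entirely and makes the Freedman machinery unnecessary here: bound both parts of the kernel drift deterministically and uniformly in $s$. Using $|\phi_{ij}(s,x)-\phi_{ij}(t,x)|\le\frac{\cderivative}{2\Lambda}\phi_{ij}(t,x)$ for every $s\in[t,t+1/(2\Lambda)]$ (by Assumption~\ref{as:smoothness} and the decreasing property of $\phi_{ij}(\cdot,x)$), the $dN_j$-integral of the kernel drift satisfies, on $\{\tau>t\}$,
\begin{equation*}
\sup_{s\in[t,\,t+\frac{1}{2\Lambda}]}\left|\int_0^{t\wedge\tau}\bigl(\phi_{ij}(s\wedge\tau,x)-\phi_{ij}(t\wedge\tau,x)\bigr)\,dN_j(x)\right|
\le \frac{\cderivative}{2\Lambda}\int_0^{t\wedge\tau}\phi_{ij}(t\wedge\tau,x)\,dN_j(x)
\le \frac{\cderivative}{2\Lambda}\cdot\frac{\lambda_i(t\wedge\tau)}{w_{ij}} \le \frac{\cderivative}{2w_{ij}},
\end{equation*}
where the middle inequality is the correct use of the definition of $\lambda_i$ (all terms in~\eqref{eq:multivariate_hawkes} are nonnegative). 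The right-hand side is a constant, requires no concentration argument, and is manifestly uniform in $s$. With this replacement, the only quantity requiring a tail bound is the new-mass counting increment $N_j((t+1/(2\Lambda))\wedge\tau)-N_j(t\wedge\tau)\preceq\mathrm{Poi}(1/2)$, and the union bound over $\mathbb{T}$ and all $(i,j)$ closes the argument as you outline.
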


\begin{proof}
Let $s \ge t$. From the definition of $M_{ij}$, we can bound
\begin{align}
\label{eq:Mij_diff_bound1}
& | M_{ij}(s \land \tau) - M_{ij}(t \land \tau) |  \le \left( N_j(s \land \tau) - N_j(t \land \tau) \right) \\
\label{eq:Mij_diff_bound2}
& \hspace{0.5cm} + \int_0^{t \land \tau} \left| \phi_{ij}(s \land \tau,x) - \phi_{ij}(t \land \tau,x) \right| dN_j(x) \\
\label{eq:Mij_diff_bound3}
& \hspace{0.5cm} + \int_{t \land \tau}^{s \land \tau} \phi_{ij}(s \land \tau , x) \lambda_j(x) dx 
\\
\label{eq:Mij_diff_bound4}
& \hspace{0.5cm} + \int_0^{t \land \tau} | \phi_{ij}(s \land \tau,x) -  \phi_{ij}(t \land \tau , x) | \lambda_j(x) dx.
\end{align}
We further simplify several terms on the right hand side. 
For all $t \le s \le t + 1/ ( 2 \Lambda)$, we have the simple bound $N_j(s \land \tau) \le N_j( (t + 1 / ( 2 \Lambda)) \land \tau)$.
We can bound \eqref{eq:Mij_diff_bound2} as follows:
\begin{align}
 \int_0^{t \land \tau} & | \phi_{ij}(s \land \tau,x) - \phi_{ij}(t \land \tau,x) | dN_j(x) \\
& \stackrel{(a)}{\le} \cderivative  \int_0^{t \land \tau} \int_{t \land \tau}^{s \land \tau } | \phi_{ij}(r,x) | dr dN_j(x) \nonumber \\
& \stackrel{(b)}{\le} \cderivative (s - t) \int_0^{t \land \tau} \phi_{ij}(t \land \tau ,x) dN_j(x) \nonumber \\
& \stackrel{(c)}{\le} \frac{\cderivative (s - t) \lambda_{i}(t \land \tau)}{w_{ij}} \nonumber \\
\label{eq:Mij_diff_term_2}
& \stackrel{(d)}{\le} \frac{\cderivative}{2 w_{ij}}.
\end{align}
Above, $(a)$ follows since the partial derivative of $\phi_{ij}(s, x)$ with respect to the first coordinate is at most $\cderivative \phi_{ij}(t, x)$ by Assumption \ref{as:smoothness} and the decreasing property of $\phi_{ij}( \cdot, x)$;
$(b)$ uses the fact that $\phi_{ij}(\cdot, x)$ is decreasing;
$(c)$ follows from the definition of $\lambda_i$;
$(d)$ uses the fact that $s - t \le 1 / (2 \Lambda)$ and that $\lambda_i(t \land \tau) \le \Lambda$.

We now bound \eqref{eq:Mij_diff_bound3}. Since $\lambda_j(x) \le \Lambda$ for $t \land \tau \le x \le s \land \tau$, and since $\phi_{ij} \le 1$, we have
\[
\int_{t \land \tau}^{s \land \tau} \phi_{ij}(s \land \tau , x) \lambda_j(x) dx \le (s - t) \Lambda \le \frac{1}{2}, 
\]
where we have used $s - t \le 1 / (2 \Lambda)$ in the final inequality.

We now bound \eqref{eq:Mij_diff_bound4}. We have
\begin{align*}
& \int_0^{t \land \tau} | \phi_{ij}(s \land \tau , x) - \phi_{ij}(t \land \tau , x) | \lambda_j(x) dx \\
& \stackrel{(e)}{\le} \cderivative (s - t) \int_0^{t \land \tau} \phi_{ij}(t \land \tau , x) \lambda_j(x) dx \\
& \stackrel{(f)}{\le} \frac{\cderivative}{2} \int_0^{t \land \tau}\phi_{ij}(t \land \tau , x)  dx \\
& \le \frac{\cderivative \Phi}{2}.
\end{align*}
Above, $(e)$ mirrors the derivation of inequality $(b)$ in \eqref{eq:Mij_diff_term_2}, and
$(f)$ uses $\lambda_j(x) \le \Lambda$ for $x \le t \land \tau$ and $s - t \le 1 / (2 \Lambda)$.

Putting everything together shows that 
\begin{multline}
\label{eq:Mij_diff_v2}
\sup_{t \le s \le t + 1 / (2 \Lambda) } | M_{ij}(s \land \tau) - M_{ij}(t \land \tau) | \\
\le  N_j \left( \left( t + \frac{1}{2 \Lambda} \right) \land \tau \right) - N_j(t \land \tau) 
+ O \left( \frac{\cderivative \Phi}{w_{ij}} \right).
\end{multline}
To complete the analysis of $M_{ij}$, it remains to study $N_j((t + 1/(2 \Lambda)) \land \tau) - N_j(t \land \tau)$. Due to the definition of $\tau$, it is readily seen that this quantity is stochastically bounded by $A \sim \mathrm{Poi}( 1/2)$. Consequently, for $x \ge 0$,
\begin{align*}
& \p \left ( N_j \left( \left( t + \frac{1}{2 \Lambda} \right)  \land \tau \right) - N_j(t \land \tau) \ge x \right ) \\
& \le \p ( A \ge x ) = \sum_{k = \lceil x \rceil}^\infty \frac{e^{-1/2}}{k!} \left( \frac{1}{2} \right)^k \le 2 \left( \frac{1}{2} \right)^x.
\end{align*}
The probability bound above, along with \eqref{eq:Mij_diff_v2} implies that with probability $1 - o ( ( T \Lambda n^2)^{-1})$, 
\[
\sup_{t \le s \le t + 1 / (2 \Lambda)} | M_{ij}(s \land \tau) - M_{ij} ( t \land \tau) | \le C \log ( T \Lambda n),
\]
where $C > 0$ is a sufficiently large constant depending on $\cderivative, \Phi$ and $w_{\min}$ (recall that $w_{\min} : = \min_{i,j \in [n]} w_{ij} \land 1$). Taking a union bound over $t \in \mathbb{T}$ and $i,j \in [n]$ proves the claim.
\end{proof}

We next bound the maximum value of $M_{ij}(s \land \tau)$.

\begin{lemma}
\label{lemma:Mij_maximal}
Let $T, \Lambda \ge 0$. There is a constant $C$ depending on $\cderivative, w_{\min}, \Phi$ such that if $\Lambda \ge C$, then 
\[
\max_{i,j \in [n]} \sup_{0 \le t \le T} | M_{ij} ( t \land \tau ) | \le 2 \sqrt{\Lambda} \log (n T \Lambda),
\]
with probability tending to 1 as $n T \Lambda \to \infty$.
\end{lemma}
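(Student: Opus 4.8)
The plan is to prove the bound first on the discrete grid $\mathbb{T}$ from Lemma~\ref{lemma:Mij_gaps}, via a union bound over all pairs $i,j\in[n]$ and all $t\in\mathbb{T}$ using the pointwise tail estimate of Lemma~\ref{lemma:Mij_tail_bound}, and then to pass to a supremum over all $t\in[0,T]$ by interpolating between consecutive grid points with Lemma~\ref{lemma:Mij_gaps}.

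First I would fix $x:=\sqrt{\Lambda}\log(nT\Lambda)$ and write $L:=\log(nT\Lambda)$. Lemma~\ref{lemma:Mij_tail_bound} gives $\p(|M_{ij}(t\wedge\tau)|\ge x)\le 2\exp(-\frac{x^2/2}{\Phi\Lambda+x/3})$, and since $\Phi\Lambda+x/3\le 2\max(\Phi\Lambda,x/3)$ the exponent is at least $\min\big(\frac{x^2}{4\Phi\Lambda},\frac{3x}{4}\big)=\min\big(\frac{L^2}{4\Phi},\frac{3\sqrt{\Lambda}\,L}{4}\big)$. Taking the constant $C$ in the hypothesis large enough that $\frac{3\sqrt{\Lambda}}{4}\ge 4$ whenever $\Lambda\ge C$, and using that $nT\Lambda\to\infty$ forces $L\to\infty$ (so eventually $\frac{L^2}{4\Phi}\ge 4L$, since $\Phi$ is a fixed constant), the exponent is at least $4L$, hence $\p(|M_{ij}(t\wedge\tau)|\ge x)\le 2(nT\Lambda)^{-4}=o((n^2\Lambda T)^{-1})$. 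Since there are $O(n^2\Lambda T)$ triples $(i,j,t)$ with $i,j\in[n]$ and $t\in\mathbb{T}$ (recall $|\mathbb{T}|\le 2\Lambda T+1$), a union bound shows that with probability $1-o(1)$ one has $|M_{ij}(t\wedge\tau)|\le \sqrt{\Lambda}\log(nT\Lambda)$ simultaneously for all $i,j\in[n]$ and all $t\in\mathbb{T}$.

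Next I would upgrade this to a uniform bound over $t\in[0,T]$. For arbitrary $t\in[0,T]$, let $t_0\in\mathbb{T}$ be the largest grid point with $t_0\le t$; consecutive grid points are $1/(2\Lambda)$ apart and $\mathbb{T}$ reaches within $1/(2\Lambda)$ of $T$, so $t\in[t_0,t_0+1/(2\Lambda)]$. On the event of Lemma~\ref{lemma:Mij_gaps} (which holds with probability $1-o(1)$), $|M_{ij}(t\wedge\tau)-M_{ij}(t_0\wedge\tau)|\le C'\log(nT\Lambda)$ for a constant $C'=C'(\cderivative,w_{\min},\Phi)$, so combining with the grid bound yields $|M_{ij}(t\wedge\tau)|\le \sqrt{\Lambda}\log(nT\Lambda)+C'\log(nT\Lambda)\le 2\sqrt{\Lambda}\log(nT\Lambda)$ as soon as $\sqrt{\Lambda}\ge C'$. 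Choosing the constant $C$ in the statement to be at least $\max\{(C')^2,(16/3)^2\}$ (together with whatever Lemma~\ref{lemma:Mij_gaps} requires) absorbs every constraint encountered, and a final union bound over the two high-probability events finishes the proof.

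The step I expect to be the main obstacle is calibrating the tail bound so that the per-triple failure probability genuinely beats the $\mathrm{poly}(nT\Lambda)$ union bound: in the sub-exponential regime of the Bernstein estimate the effective exponent scales only like $\sqrt{\Lambda}\log(nT\Lambda)$ rather than $\Lambda\log(nT\Lambda)$, so one must exploit the hypothesis that $\Lambda$ is at least a sufficiently large \emph{absolute} constant (not merely positive) to make the coefficient of $\log(nT\Lambda)$ exceed the exponent $2$ coming from the $n^2$ pairs. Everything else---the grid-to-continuum interpolation and the bookkeeping of constants---is routine given Lemmas~\ref{lemma:Mij_tail_bound} and~\ref{lemma:Mij_gaps}.
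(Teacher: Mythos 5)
Your proof is correct and follows essentially the same two-step route as the paper: bound $|M_{ij}(t\land\tau)|$ on the discrete grid $\mathbb{T}$ via Lemma~\ref{lemma:Mij_tail_bound} and a union bound over all $(i,j,t)$, then fill in between grid points with Lemma~\ref{lemma:Mij_gaps} and absorb the extra $O(\log(nT\Lambda))$ into the factor of $2$ by taking $\sqrt{\Lambda}$ at least the interpolation constant. The paper presents the two steps in the opposite order (interpolation per pair, then grid union bound) and leaves the Bernstein-exponent calibration as an assertion, whereas you spell it out via the split $\Phi\Lambda + x/3 \le 2\max(\Phi\Lambda, x/3)$; the substance is identical.
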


\begin{proof}
In this proof, little $o$ notation is with respect to $n T \Lambda$ tending to infinity.

Recall the set of indices $\mathbb{T}$ from the statement of Lemma \ref{lemma:Mij_gaps}.
By Lemma \ref{lemma:Mij_gaps}, it holds with probability $1 - o(1)$ that for all $i,j \in [n]$,  
\begin{equation}
\label{eq:Mij_sup_bound}
\sup_{0 \le t \le T} | M_{ij}(t \land \tau) | \le \max_{t \in \mathbb{T}} | M_{ij}(t \land \tau) | + C_1 \log ( T \Lambda n),
\end{equation}
where $C_1 > 0$ is a sufficiently large constant depending on $\cderivative, w_{\min}, \Phi$. 
Next, Lemma \ref{lemma:Mij_tail_bound} along with a union bound over $t \in \mathbb{T}$ shows that 
\begin{multline*}
\p \left( \max_{t \in \mathbb{T}} |M_{ij}(t \land \tau)| \ge \sqrt{\Lambda} \log ( n T \Lambda) \right) = o \left( \frac{1}{n^2} \right),
\end{multline*}
provided $n T \Lambda \to \infty$ and $\Lambda$ is sufficiently large. Additionally taking a union bound over $i,j \in [n]$ shows that with probability $1 - o(1)$, it holds for all $i,j \in [n]$ that $\max_{t \in \mathbb{T}} | M_{ij}(t \land \tau) | \le \sqrt{\Lambda} \log ( n T \Lambda)$. This result combined with \eqref{eq:Mij_sup_bound} proves the claim.
\end{proof}

We are now ready to put everything together to prove the main result of this subsection.

\begin{proof}[Proof of Lemma \ref{lemma:maximum_rate}]
Recall the stopping time $\tau$. For $t \in [0,T]$ and $i \in [n]$, we can bound
\begin{align*}
\lambda_i(t \land \tau) & = \mu_i(t \land \tau) \\
& \hspace{0.5cm} + \sum_{j = 1}^n w_{ij} \int_0^{t \land \tau} \phi_{ij}(t \land \tau , s) \lambda_j(s) ds \\
& \hspace{0.5cm} + \sum_{j = 1}^n w_{ij} M_{ij}(t \land \tau) \\
& \le \mu_{\max} + \left( \sum_{j = 1}^n w_{ij} \int_0^{t \land \tau} \phi_{ij}(t \land \tau, s) ds \right) \Lambda \\
& \hspace{0.5cm} + d w_{\max} \max_{i,j \in [n]}  M_{ij} ( t \land \tau)  \\
& \le \mu_{\max} + (1 - \eta) \Lambda + 2 d w_{\max} \sqrt{\Lambda} \log ( n T \Lambda),
\end{align*}
where $\eta > 0$ is defined in Assumption \ref{as:stability},
and the final inequality holds with probability $1 - o(1)$ as $n T \Lambda \to \infty$, by Lemma \ref{lemma:Mij_maximal}.
The upper bound derived does not depend on the choice of $t$ or $i$, so
\begin{multline*}
\sup_{0 \le t \le T} \lambda_{\max}(t \land \tau) \le (1 - \eta) \Lambda \\
+ \mu_{\max} + 2d w_{\max} \sqrt{\Lambda} \log ( n T \Lambda).
\end{multline*}
If $\tau \le T$, then $\sup_{0 \le t \le T} \lambda_{\max}(t \land \tau) = \Lambda$, which implies
\[
\Lambda \le \frac{1}{\eta} \left( \mu_{\max} + 2 d w_{\max} \sqrt{\Lambda} \log (n T \Lambda) \right).
\]
However, this inequality does not hold if $\Lambda = d^2 \log^4 (nT)$, for $nT$ sufficiently large. Consequently, it must be the case that $\tau > T$ for this choice of $\Lambda$, hence $\sup_{0 \le t \le T} \lambda_{\max}(t) \le d^2 \log^4 (nT)$ as claimed.
\end{proof}

\subsection{Analysis of short waiting times: Proof of Lemma \ref{lemma:EX}}
\label{sec:short_waiting_times}

The following result, which analyzes the distribution of the first $i$-event after time $t$, is an important building block for Lemma \ref{lemma:EX}.

\begin{lemma}
\label{lemma:Ti_probability}
Let $t \ge 0$ and $\epsilon \in (0,1]$. 
Assume that $\Lambda d \epsilon \le 1/C_1$, $\epsilon \ge n^{-1/2}$, and that $\Lambda \ge C_2 d \log n$, where $C_1,C_2$ are sufficiently large constants depending on $\mu_{\max}$ and $w_{\max}$.
Then there is a constant $C_3 = C_3 ( \cderivative, \mu_{\max})$ such that if $\lambda_{\max}(t) \le \Lambda$,
\[
\left| \p (  N_i[t , t + \epsilon] = 1 \vert \cF_t) - \epsilon \lambda_i(t) \right|  \\
\le C_3 (d \Lambda \epsilon)^2.
\]
\end{lemma}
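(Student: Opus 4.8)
The plan is to compare the event $\{N_i[t,t+\epsilon]=1\}$ with the first-arrival event of a \emph{frozen} inhomogeneous Poisson process whose rate is the $\cF_t$-measurable function $\tilde\lambda_i(s):=\mu_i(s)+\sum_{j}w_{ij}\int_0^t\phi_{ij}(s,u)\,dN_j(u)$, $s\ge t$ --- the rate node $i$ would experience if none of its in-neighbors fired after time $t$. Write $T_i:=\inf\{s>t:N_i(s)>N_i(t)\}$ for the first $i$-event after $t$ and $\hat T:=\inf\{s>t:N_j(s)>N_j(t)\text{ for some }j\ne i\text{ with }w_{ij}>0\}$. The two basic observations are: (i) $\lambda_i(s)=\tilde\lambda_i(s)$ for all $s\in(t,T_i\wedge\hat T)$ --- since only events at $j$ with $w_{ij}>0$ change $\lambda_i$, and there are none in $(t,s)$ before $T_i\wedge\hat T$ --- so in particular $\tilde\lambda_i(t)=\lambda_i(t)$; and (ii) $\lambda_i(s)\ge\tilde\lambda_i(s)$ for all $s\ge t$, since $\int_0^s=\int_0^t+\int_t^s$ with the second term nonnegative.

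First I would peel off multiplicities: $\{N_i[t,t+\epsilon]\ge1\}=\{T_i\le t+\epsilon\}$, so $\p(N_i[t,t+\epsilon]=1\mid\cF_t)=\p(T_i\le t+\epsilon\mid\cF_t)-\p(N_i[t,t+\epsilon]\ge2\mid\cF_t)$, and Lemma~\ref{lemma:multiple_firings} (with $S=\{i\}$, $k=2$, using $\epsilon\ge n^{-1/2}$ and $d\Lambda\epsilon\le1/C_1$) bounds the last term by $(8\Lambda\epsilon)^2\le 64(d\Lambda\epsilon)^2$ on $\{\lambda_{\max}(t)\le\Lambda\}$. Next, a thinning coupling --- valid by observation (ii) --- realizes $N_i$ together with an inhomogeneous Poisson process $\bar N$ on $[t,\infty)$ of intensity $\tilde\lambda_i$ such that every point of $\bar N$ is a point of $N_i$; writing $\bar T_i$ for its first point after $t$, domination gives $\{\bar T_i\le t+\epsilon\}\subseteq\{T_i\le t+\epsilon\}$, while on $\{\hat T>T_i\}$ the retention probability of the $N_i$-point at $T_i$ is $\tilde\lambda_i(T_i)/\lambda_i(T_i^-)=1$ by observation (i) and continuity of $\tilde\lambda_i$, so $\bar T_i=T_i$ there. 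Hence $\{T_i\le t+\epsilon\}\setminus\{\bar T_i\le t+\epsilon\}$ is contained in the event that at least two of the point processes $\{N_j:w_{ij}>0\}$ fire in $[t,t+\epsilon]$ (one at $i$, one at a distinct neighbor), whose probability is at most $(8d\Lambda\epsilon)^2\le 64(d\Lambda\epsilon)^2$ on $\{\lambda_{\max}(t)\le\Lambda\}$ by Lemma~\ref{lemma:multiple_firings} ($S=\{j:w_{ij}>0\}$, $|S|\le d$, $k=2$). Therefore $0\le\p(T_i\le t+\epsilon\mid\cF_t)-\p(\bar T_i\le t+\epsilon\mid\cF_t)\le64(d\Lambda\epsilon)^2$.

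It then remains to estimate $\p(\bar T_i\le t+\epsilon\mid\cF_t)$. Since $\tilde\lambda_i$ is $\cF_t$-measurable, conditionally on $\cF_t$ the time $\bar T_i-t$ is exactly the first event time of a deterministic inhomogeneous Poisson process with rate $s\mapsto\tilde\lambda_i(t+s)$. Differentiating $\tilde\lambda_i$ under the integral and invoking Assumptions~\ref{as:delay} and~\ref{as:smoothness} gives $|\tilde\lambda_i'(s)|\le\cderivative\,\tilde\lambda_i(s)$, and because $\phi_{ij}(\cdot,u)$ is decreasing and $\mu_i\le\mu_{\max}$ we get $\tilde\lambda_i(s)\le\mu_{\max}+\lambda_i(t)\le\mu_{\max}+\Lambda$ on $[t,t+\epsilon]$; since $\Lambda\ge C_2 d\log n\ge1$, Lemma~\ref{lemma:T_deterministic_rate} applies with $r_{\max}=\mu_{\max}+\Lambda$ and yields $|\p(\bar T_i\le t+\epsilon\mid\cF_t)-\epsilon\lambda_i(t)|\le\cderivative(\mu_{\max}+\Lambda)^2\epsilon^2\le\cderivative(1+\mu_{\max})^2(d\Lambda\epsilon)^2$. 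Combining the three estimates by the triangle inequality proves the lemma with, say, $C_3=128+\cderivative(1+\mu_{\max})^2$, which depends only on $\cderivative$ and $\mu_{\max}$ as claimed.

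The main obstacle is the coupling step: one must show that the true rate $\lambda_i$ and its frozen version $\tilde\lambda_i$ produce the \emph{same} first $i$-event time except on an event of probability $O((d\Lambda\epsilon)^2)$. The point --- the same one underlying the heuristic after \eqref{eq:Xij_explained} --- is that the only way $\lambda_i$ can drift away from $\tilde\lambda_i$ before the first $i$-event is for an in-neighbor to fire, and such a firing \emph{together with} the $i$-event is a coincidence of two events among the at most $d$ in-neighbors of $i$, which sparsity makes second order. A secondary nuisance is checking that freezing the kernels does not inflate the log-derivative bound needed by Lemma~\ref{lemma:T_deterministic_rate}; this uses that $\phi_{ij}(\cdot,u)$ is decreasing, so the frozen excitation term stays below $\lambda_i(t)\le\Lambda$ throughout $[t,t+\epsilon]$.
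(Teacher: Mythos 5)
Your proof is correct and follows the same strategy as the paper: it compares $N_i$ on $[t,t+\epsilon]$ to an inhomogeneous Poisson process driven by the frozen, $\cF_t$-measurable rate $\tilde\lambda_i(\cdot)=r(\cdot)$, estimates that process's first-arrival probability via Lemma~\ref{lemma:T_deterministic_rate}, and bounds the remaining discrepancy by double-firing probabilities via Lemma~\ref{lemma:multiple_firings}. The paper sets up the comparison by restricting to the event $\cE$ that no in-neighbor of $i$ fires in $[t,t+\epsilon]$ and handling $\cE^c$ separately, whereas you realize it directly through a thinning coupling $\bar N\subseteq N_i$ with $\{\bar T_i\le t+\epsilon\}\subseteq\{T_i\le t+\epsilon\}$; this is a minor variation on the same idea and yields the same $O((d\Lambda\epsilon)^2)$ error.
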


\begin{proof}
We find it easier to analyze the probability of the event $\{ T_i(t) \le \epsilon \}$. Provided $\lambda_{\max}(t) \le \Lambda$, this is related to the probability of interest via 
\begin{align}
& | \p ( T_i(t) \le \epsilon \vert \cF_t) - \p ( N_i[t, t + \epsilon] = 1 \vert \cF_t ) | \nonumber \\
& \le \p ( \text{At least 2 events at $i$ occur in $[t, t + \epsilon]$} \vert \cF_t) \nonumber \\
\label{eq:probability_relation}
& \le (8 \Lambda \epsilon)^2,
\end{align}
where the final inequality is due to Lemma \ref{lemma:multiple_firings}.

To analyze $\p ( T_i(t) \le \epsilon \vert \cF_t)$, we start by introducing the event 
\[
\cE : = \{\text{No event occurs in $\cN(i) \setminus \{i \}$ in $[t, t + \epsilon]$} \}.
\]
Notice that on $\cE$, the distribution of $T_i(t) - t$ is not influenced by any other events in the Hawkes process.
Hence, we can couple $T_i(t) - t$ with another random variable $T$, where $T$ is the first event time of an independent, inhomogeneous point process with rate function 
\[
r(s) : = \mu_i(t + s) + \sum_{j = 1}^n w_{ij} \int_0^t \phi_{ij}(t + s , x) dN_j(x).
\]
In particular, $r(0) = \lambda_i(t)$ and if $\lambda_{\max}(t) \le \Lambda$ then
\begin{equation}
\label{eq:r_bound}
r(s) \le \mu_{\max} + \lambda_i(t) \le 2 \mu_{\max} \Lambda,
\end{equation}
where the first inequality holds since $\phi$ is decreasing, and the second inequality holds since $\mu_{\max} \ge 1$ and $\Lambda \ge 1$.

Turning to the probability of interest, we have that
\begin{align*}
\p ( \{ T_i(t) \le t + \epsilon \} \cap \cE \vert \cF_t) & \stackrel{(a)}{=} \p ( \{ T \le \epsilon \} \cap \cE \vert \cF_t ) \\
& \stackrel{(b)}{=} \p ( T \le \epsilon \vert \cF_t) \cdot \p ( \cE \vert \cF_t ),
\end{align*}
where $(a)$ follows from the coupling of $T_i(t) - t$ and $T$. The equality $(b)$ holds since $T$ is independent of the Hawkes process and associated events. 

Next, we characterize the two product terms on the right hand side further. By \eqref{eq:r_bound} and Lemma \ref{lemma:T_deterministic_rate}, we have
\begin{equation}
\label{eq:T_characterization}
| \p ( T \le \epsilon \vert \cF_t ) - \epsilon \lambda_i(t) | \mathbf{1}( \lambda_{\max}(t) \le \Lambda) \le 4 \cderivative \mu_{\max}^2 \Lambda^2 \epsilon^2. 
\end{equation}
Additionally, by Lemma \ref{lemma:multiple_firings},
\begin{equation}
\label{eq:E2_characterization}
| \p ( \cE  \vert \cF_t ) - 1 | \mathbf{1}( \lambda_{\max}(t) \le \Lambda ) \le 8d \Lambda \epsilon.
\end{equation}
Together, \eqref{eq:T_characterization} and \eqref{eq:E2_characterization} show that if $\lambda_{\max}(t) \le \Lambda$, then
\begin{equation}
\label{eq:Ti_analysis_2}
| \p ( \{ T_i(t) \le t + \epsilon \} \cap \cE \vert \cF_t ) - \epsilon \lambda_i(t) | \le C d \Lambda^2 \epsilon^2,
\end{equation}
for a constant $C > 0$ depending on $\cderivative$ and $\mu_{\max}$.
On the other hand, if the event $\{ T_i(t) \le \epsilon \} \cap \cE^c$ holds, then at least two events occur in $\cN(i) \cup \{i \}$ in the time period $[t, t + \epsilon]$. By Lemma \ref{lemma:multiple_firings}, the probability of this event is at most $(8 (d + 1) \Lambda \epsilon)^2$. This bound combined with \eqref{eq:probability_relation} and \eqref{eq:Ti_analysis_2} prove the claim.
\qed
\end{proof}

We are now ready to prove the main result of this section.

\begin{proof}[Proof of Lemma \ref{lemma:EX}]
For a collection of (not necessarily distinct) nodes $i_1, \ldots, i_k \in [n]$, we define the indicator variable 
\begin{equation}
X_{i_1 \ldots i_k} (t, \epsilon) : = \prod_{\ell = 1}^k  \mathbf{1} ( N_{i_\ell}[t + (\ell-1)\epsilon, t + \ell \epsilon] = 1 ).
\end{equation}
We prove the more general result that, for any $k \ge 1$, there is a sufficiently large constant $C > 0$ depending on $k, \mu_{\max}, w_{\max}$ and $\cderivative$ such that if $d \Lambda \epsilon \le 1 / C, \epsilon \ge n^{- 1/(k + 1)}$ and $\Lambda \ge Cd \log n$, then 
\begin{multline*}
\left| \E [ X_{i_1 \ldots i_k}(t, \epsilon) \vert \cF_t] - \epsilon^k \prod_{\ell = 1}^k \left( \lambda_{i_\ell}(t) + \sum_{m = 1}^{\ell - 1} w_{i_\ell i_m} \right) \right|\\
\le C (d \Lambda \epsilon)^{k + 1}.
\end{multline*}

We prove the claim by induction on $k$. The base case $k  = 1$ is proved in Lemma \ref{lemma:Ti_probability}, so we proceed by proving the inductive step. In what follows, we assume that all introduced constants are positive, sufficiently large, and depend only on $k, \mu_{\max},w_{\max}$ and $\cderivative$.

Let $k \ge 1$ and $i_1, \ldots, i_{k + 1} \in [n]$ be a collection of (not necessarily distinct) nodes. As a shorthand, we will write $X_{k + 1}$ instead of $X_{i_1, \ldots, i_{k + 1}}(t, \epsilon)$ and $X_k$ instead of $X_{i_1, \ldots, i_k}(t, \epsilon)$.
Define the set $S : = \bigcup_{\ell = 1}^{k + 1} ( \cN(i_\ell) \cup \{ i_\ell \} )$ as well as the events
\[
\cA_m : = \{ \text{At most $m$ events occur in $S$ in $[t, t + m\epsilon]$} \},
\]
where we will take $m \in \{ k, k + 1 \}$. Additionally, we define the event
\[
\cB  : = \left \{ \sup_{t \le s \le t + k\epsilon} \lambda_{\max}(s) \le 2 \Lambda \right \}.
\]
Then we can bound
\begin{align}
\E [ X_{k + 1} \vert \cF_t] & \le \E [ X_{k + 1} \mathbf{1}( \cA_k \cap \cB) \vert \cF_t] \nonumber \\
\label{eq:EX_bound1}
& \hspace{1cm} + \p ( \cA_{k + 1}^c \vert \cF_t) + \p ( \cB^c \vert \cF_t ).
\end{align}
On the event $\{ \lambda_{\max}(t) \le \Lambda \}$, Lemmas \ref{lemma:short_term_rate_bound} and \ref{lemma:multiple_firings} imply that the sum of the second and third terms on the right hand side of \eqref{eq:EX_bound1} is $C_1 ( d \Lambda \epsilon)^{k + 2}$ when $\epsilon^{ k + 2} \ge 1/n$. To handle the first term on the right hand side of \eqref{eq:EX_bound1}, we note that $X_{k + 1} \mathbf{1} ( \cA_k \cap \cB)$ can be written as a product of indicators of the events $\{ X_k = 1 \} \cap \cA_k \cap \cB$ and 
\[
\cD : = \{ N_{i_{k + 1}}[t + k \epsilon, t + (k + 1) \epsilon] = 1 \},
\]
where the former event is $\cF_{t + k \epsilon}$-measurable. In particular, if the former event holds, then
\begin{align}
 \p ( \cD \vert \cF_{t + k \epsilon} ) & \stackrel{(a)}{\le} \epsilon \lambda_{i_{k + 1}}(t + k \epsilon) + C_2 ( d \Lambda \epsilon)^2 \nonumber \\
\label{eq:Tik+1_bound}
&  \stackrel{(b)}{\le} \epsilon \left( \lambda_{i_{k + 1}}(t) + \sum_{\ell = 1}^k w_{i_{k+1} i_\ell} \right) \hspace{-0.1cm} + C_3 ( d \Lambda \epsilon)^2,
\end{align}
where $(a)$ follows from Lemma \ref{lemma:Ti_probability} and $(b)$ is due to Lemma \ref{lemma:lambda_first_order_perturbation}. It follows that
\begin{align*}
& \E [ X_{k + 1} \mathbf{1}( \cA_k \cap \cB) \vert \cF_t] \\
& \hspace{0.5cm} = \E [ \p ( \cD \vert \cF_{t + k \epsilon}) X_k \mathbf{1}( \cA_k \cap \cB) \vert \cF_t ] \\
& \hspace{0.5cm} \stackrel{(c)}{\le} \left( \epsilon \left( \lambda_{i_{k + 1}}(t) + \sum_{\ell = 1}^k w_{i_{k+1} i_\ell} \right) + C_3 ( d \Lambda \epsilon)^2 \right) \\
& \hspace{1cm} \cdot \E [ X_k \mathbf{1}( \cA_k \cap \cB) \vert \cF_t] \\
& \hspace{0.5cm} \stackrel{(d)}{\le} \left( \epsilon \left( \lambda_{i_{k + 1}}(t) + \sum_{\ell = 1}^k w_{i_{k+1} i_\ell} \right) + C_3 ( d \Lambda \epsilon)^2 \right) \\
& \hspace{0.75cm} \cdot \left( \epsilon^k \prod_{\ell = 1}^k \left( \lambda_{i_\ell}(t) + \sum_{m = 1}^{\ell - 1} w_{i_\ell i_m} \right) + C_4 ( d \Lambda \epsilon)^{k + 1} \right) \\
&  \hspace{0.5cm} \stackrel{(e)}{=} \epsilon^{k + 1} \prod_{\ell = 1}^{k + 1} \left( \lambda_{i_\ell}(t) + \sum_{m = 1}^{\ell - 1} w_{i_\ell i_m} \right)  + C_5 ( d \Lambda \epsilon)^{k + 2}.
\end{align*}
Above, $(c)$ holds since the bound in \eqref{eq:Tik+1_bound} is $\cF_{t + k \epsilon}$-measurable. To obtain the inequality $(d)$ we have upper bounded $\E [ X_k \mathbf{1}( \cA_k \cap \cB) \vert \cF_t]$ by $\E [ X_k \vert \cF_t]$ and applied the inductive hypothesis.
Finally, $(e)$ is obtained by consolidating terms.

Although we have only formally proved an upper bound for $\E [ X_{k + 1} \vert \cF_t]$, our analysis is tight up to first order terms in $\epsilon$, and matching lower bounds can be readily obtained. The inductive step holds, therefore the lemma is proved. 
\end{proof}

\subsection{Analysis of key statistics: Proof of Lemma \ref{lemma:D_analysis}}
\label{subsec:D_analysis}

We focus here on proving \eqref{eq:D1_characterization} only, since \eqref{eq:D2_characterization} can be obtained through identical arguments.

As a shorthand, denote $\lambda_{ij}^{(1)}(t) : = w_{ji} \lambda_i(t) - w_{ij} \lambda_j(t)$.
We can then bound
\begin{align}
& \left| D_{ij}^{(1)}(T, \epsilon) - \epsilon^2 \left( w_{ji} \Lambda_i - w_{ij} \Lambda_j \right) \right| \nonumber \\
\label{eq:D_bound_term1}
& \hspace{0.1cm} \le \left| \sum_{k = 0}^{ \lfloor T/(3 \epsilon) \rfloor} ( \Delta_{ij}^{(1)}(k\epsilon, \epsilon) - \E [ \Delta_{ij}^{(1)}(k\epsilon, \epsilon) \vert \cF_{k \epsilon}] \right| \\
\label{eq:D_bound_term2}
& \hspace{0.3cm} + \sum_{k = 0}^{ \lfloor T / (3 \epsilon) \rfloor} \left| \E [ \Delta_{ij}^{(1)}(k\epsilon, \epsilon) \vert \cF_{k \epsilon}] - \epsilon^2 \lambda_{ij}^{(1)}(k \epsilon) \right|.
\end{align}
We proceed by bounding the two terms on the right hand side. Since $\Delta_{ij}^{(1)}(t, \epsilon)$ is at most 1 in absolute value and since none of the $\Delta_{ij}^{(1)}$'s contain information about overlapping intervals, Azuma's inequality along with a union bound implies that with probability $1 - o(1)$, the term \eqref{eq:D_bound_term1} is at most $O( \sqrt{T \log (n) / \epsilon} )$ for all $i,j \in [n]$.

To handle \eqref{eq:D_bound_term2}, we work on the event $\cE : = \{ \sup_{0 \le t \le T} \lambda_{\max}(t) \le d^2 \log^4(nT) \}$, which holds with probability $1 - o(1)$ by Lemma \ref{lemma:maximum_rate}. On this event, each summand in \eqref{eq:D_bound_term2} is $O( d^9 \log^{12} (nT) \epsilon^3)$ by Lemma \ref{lemma:EX}, provided $d^3 \log^4(nT) \epsilon$ is sufficiently small and $\epsilon \ge n^{-1/3}$. Under these conditions, the entire sum in \eqref{eq:D_bound_term2} is $O(d^9 T \log^{12}(nT) \epsilon^2)$. Together, \eqref{eq:D_bound_term1} and \eqref{eq:D_bound_term2} prove \eqref{eq:D1_characterization}. \hfill \qed

\end{document}